\setlist[1]{leftmargin=*}
\newtheorem{Thm}{Theorem}[section]
\newtheorem{Prop}[Thm]{Proposition}
\newtheorem{Lemma}[Thm]{Lemma}
\newtheorem{Cor}[Thm]{Corollary}
\theoremstyle{definition}
\newtheorem{Def}[Thm]{Definition}
\numberwithin{equation}{section}
\def\op{\lbrack\hskip-1.6pt\lbrack\relax}
\def\cl{\rbrack\hskip-1.6pt\rbrack\relax}
\def\pv#1{\ensuremath{{\sf#1}}}
\def\Cl#1{\ensuremath{\mathcal #1}}
\def\Om#1#2{\ensuremath{\overline\Omega_{#1}{\sf#2}}}
\def\om#1#2{\ensuremath{\Omega_{#1}{\sf#2}}}
\def\omo#1#2{\ensuremath{\Omega^{\omega}_{#1}{\sf#2}}}
\def\oms#1#2{\ensuremath{\Omega^{\sigma}_{#1}{\sf#2}}}
\def\pj#1{\ensuremath{p_{\sf#1}}}
\def\tclcv#1#2{\ensuremath{\mathrm{cl}_{\omega,\pv{#1}}(#2)}}
\def\tclc#1{\tclcv{S}{#1}}
\begin{document}

\author[J. Almeida]{Jorge Almeida}
\address[J. Almeida]{CMUP, Departamento de Matem\'atica,
  Faculdade de Ci\^encias\\
  Universidade do Porto, Rua do Campo Alegre, 687, 4169-007 Porto,
  Portugal.}
\email{\href{mailto:jalmeida@fc.up.pt}{jalmeida@fc.up.pt}}

\author[J. C. Costa]{Jos\'e Carlos Costa}
\address[J. C. Costa]{Centro de Matem\'atica, Universidade do Minho,
  Campus de Gualtar, 4700-320 Braga, Portugal.}
\email{\href{mailto:jcosta@math.uminho.pt}{jcosta@math.uminho.pt}}

\author[M. Zeitoun]{Marc Zeitoun}
\address[M. Zeitoun]{LaBRI, Universit\'es de Bordeaux \& CNRS UMR~5800.
  351 cours de la Lib\'eration, 33405 Talence Cedex, France.}
\email{\href{mailto:mz@labri.fr}{mz@labri.fr}}

\title{Recognizing pro-\pv R closures of regular languages}

\date{\today}

\begin{abstract}
  Given a regular language $L$, we effectively construct a unary
  semigroup that recognizes the topological closure of~$L$ in the free
  unary semigroup relative to the variety of unary semigroups generated
  by the pseudovariety \pv R of all finite \Cl R-trivial semigroups. In
  particular, we obtain a new effective solution of the separation
  problem of regular languages by \pv R-languages.
\end{abstract}

\keywords{Free profinite semigroup, unary semigroup, regular language,
  profinite closure, R-trivial semigroup, algebraic recognition,
  omega-term}

\subjclass[2010]{Primary: 20M07; Secondary:
  20M35, 22A99}

\maketitle

\section{Introduction}
\label{sec:intro}

There is a remarkable connection between the theories of finite
semigroups and regular languages. At its basis is the well known and
simple fact of the finiteness of the syntactic semigroups of such
languages, which may be effectively computed as the transition
semigroups of their minimal automata. This suggests a method for testing
whether a regular language has a certain combinatorial property, namely
by verifying whether its syntactic semigroup enjoys an associated
algebraic property. A general framework for this kind of problems and a
characterization of which properties may be handled in this way was
given by Eilenberg~\cite{Eilenberg:1976}. On the semigroup side, the
relevant algebraic properties define so-called pseudovarieties, which
are nonempty classes of finite semigroups closed under taking
homomorphic images, subsemigroups and finite direct products. In
particular, Eilenberg's result prompted considerable interest in
studying pseudovarieties.

For the above method to be successful for a suitable combinatorial
property, one needs to be able to test membership of a given finite
semigroup in the corresponding pseudovariety. Thus, a key question on a
pseudovariety is to determine whether its membership problem admits an
algorithmic solution, in which case the pseudovariety is said to be
decidable. It turns out that several combinatorial constructions on
classes of languages correspond to operations on pseudovarieties that
are known not to preserve decidability in general
\cite{Albert&Baldinger&Rhodes:1992,Auinger&Steinberg:2001b}. This fact
has led to the search for stronger algorithmic properties that may be
preserved by such operations. The notion of a tame pseudovariety, in its
various flavors, has emerged from this approach
\cite{Almeida&Steinberg:2000a}, inspired by seminal work of
Ash~\cite{Ash:1991}. A quick introduction to this line of ideas and its
applications may be found in~\cite{Almeida:2003cshort}.

Tameness is intimately connected with profinite topologies. Roughly
speaking, tameness of a pseudovariety \pv V means that there is a
natural algebraic structure on profinite semigroups, with the same
homomorphisms, enjoying special properties. Profinite semigroups are
then naturally viewed as algebras of that kind and one may speak of the
variety of such algebras generated by~\pv V. One of the key properties
is the word problem in such relatively free algebras. The other key
property has to do with the solution, modulo~\pv V, of finite systems of
semigroup equations with clopen constraints: should the system admit a
solution, does it also have a solution in the restricted algebraic
language?

Even rather simple systems of equations as that reduced to the single
equation $x=y$ lead to highly nontrivial problems on pseudovarieties of
interest. Determining whether that equation with clopen constraints has
a solution modulo~\pv V is equivalent to the following \pv V-separation
problem: given two regular languages, determine whether there is a
language whose syntactic semigroup belongs to~\pv V which contains one
of them and is disjoint from the other; in topological terms,
this means that the closures in the free pro-V semigroup of the given
languages are disjoint \cite{Almeida:1996d}. The algorithmic solution of
this problem for various pseudovarieties turns out to have numerous
applications (see, for instance, \cite{Place&Zeitoun:2016a}).

Among pseudovarieties that have deserved a lot of attention, for their
connections with formal language theory or for their inherent algebraic
interest, is the pseudovariety \pv R of all finite \Cl R-trivial
semigroups, that is finite semigroups in which every principal right
ideal admits only one element as a generator. Its word problem for the
signature consisting of multiplication and the $\omega$-power (which, in
a finite semigroup, gives the idempotent power of the base) has a
particularly nice solution~\cite{Almeida&Zeitoun:2003b} (see
also~\cite{Kufleitner&Wachter:2018}). Moreover, \pv R has very strong
tameness properties~\cite{Almeida&Costa&Zeitoun:2005b} with respect to
this signature.

The main contribution of this paper is to show that the pro-\pv R
closure of a regular language in the free $\omega$-semigroup relatively
to the pseudovariety \pv R is recognized by a homomorphism into a finite
$\omega$-semigroup. The proof is constructive: starting from a finite
automaton recognizing the given regular language, we construct a finite
recognizer for the pro-\pv R closure of the language, in which the image
of the language is effectively computable. As a consequence, we obtain a
new algorithm to test whether the intersection of the pro-\pv R closures
of finitely many regular languages is empty or not. Indeed, this
property is clearly decidable given finite recognizers for these
closures. This problem is known to be equivalent to testing whether a
subset of a finite semigroup is \pv R-pointlike \cite{Almeida:1996d}.
Therefore, our result provides an algorithmic solution for it, and also
for testing whether such a subset is \pv R-idempotent pointlike. In
particular, we solve the \pv R-separation problem for regular languages.

The paper is organized as follows. In Section~\ref{sec:preliminaries},
we introduce the necessary terminology and background.
Section~\ref{sec:R-closures} serves to construct a first finite
approximation to a semigroup modeling the $\omega$-words in the closure
of a regular language. A suitable (but unnatural) $\omega$-power and a
natural partial order on such a finite semigroup are considered
respectively in Sections~\ref{sec:alt-w-power} and~\ref{sec:order}.
Finally, in Section~\ref{sec:recognition}, it is shown that the
previously constructed unary semigroup recognizes the topological
closure of the given regular language, and some decidability
applications are drawn.

\section{Preliminaries}
\label{sec:preliminaries}

The reader is referred to \cite{Almeida:2003cshort,Almeida&Costa:2015hb}
for quick introductions to the topics of this paper. Nevertheless, we
briefly recall the notions involved in our discussions.

Finite semigroups are viewed as discrete topological spaces. A
\emph{profinite semigroup} is an inverse limit of an inverse system of
finite semigroups; equivalently, it is a (multiplicative) semigroup with
a topology for which the multiplication is continuous and such that the
topology is compact (Hausdorff) and zero-dimensional. Given an element
$s$ of a profinite semigroup and an integer $k$, the sequence
$(s^{n!+k})_n$ converges to an element, denoted $s^{\omega+k}$. In
particular, for $k=0$, we get the element $s^\omega=s^{\omega+0}$, which
is idempotent.

By a \emph{pseudovariety} we mean a (nonempty) class \pv V of finite
semigroups that is closed under taking homomorphic images, subsemigroups
and finite direct products. The pseudovariety of all finite semigroups
is denoted \pv S. A~profinite semigroup $S$ is said to be \emph{pro-\pv
  V} if distinct points may be separated by continuous homomorphisms
into semigroups from~\pv V. For a finite set $A$, a pro-\pv V semigroup
$S$ is said to be \emph{free pro-\pv V over $A$} if there is a mapping
$\iota:A\to S$ whose image generates a dense subsemigroup of~$S$ and
such that, for every function $\varphi:A\to T$ into a pro-\pv V
semigroup $T$, there is a unique continuous homomorphism
$\hat{\varphi}:S\to T$ such that $\hat{\varphi}\circ\iota=\varphi$. Such
a pro-\pv V semigroup $S$ always exists and it is clearly unique up to
homeomorphic isomorphism. It is denoted \Om AV. The elements of \Om AV
are called \emph{pseudowords over~\pv V} or simply \emph{pseudowords} if
$\pv V=\pv S$. The unique continuous homomorphism $\Om AS\to\Om AV$
induced by the generating mapping $A\to\Om AV$ is denoted $\pj V$.

Consider the pseudovariety \pv{Sl} of all finite semilattices,
commutative semigroups in which all elements are idempotents. As is well
known, we may view \Om A{Sl} as the semigroup of nonempty subsets of~$A$
under the operation of union. The continuous homomorphism $\pj{Sl}:\Om
AS\to\Om A{Sl}$ that sends each free generator $a\in A$ to $\{a\}$ is
also denoted $c$ and it is called the \emph{content} function.

A key pseudovariety in our study is the class \pv R of all finite
semigroups in which Green's relation \Cl R is trivial, that is, if two
elements generate the same right ideal then they are equal.

The \emph{cumulative content} $\vec c(w)$ of a pseudoword $w\in\Om AS$
consists of all letters $a\in A$ for which there exists a factorization
$w=uv$ with $\pj R(v)$ idempotent and $a\in c(v)$. The terminology comes
from~\cite{Almeida&Weil:1996b}, where it was used in a more restrictive
sense, and~\cite{Almeida&Zeitoun:2003b}, where the definition is easily
recognized to be equivalent to the one adopted here.

This paper deals specially with unary semigroups, that is semigroups
with an additional unary operation, which will be usually denoted as an
$\omega$-power. Such unary semigroups will, therefore, often be called
\emph{$\omega$-semigroups}. As we have observed above, profinite
semigroups have a natural structure of $\omega$-semigroups. In
particular, we may consider the variety of $\omega$-semigroups generated
by the semigroups of a given pseudovariety \pv V; it is denoted $\pv
V^\omega$. The $\omega$-semigroup in~$\pv V^\omega$ freely generated by
a (finite) set $A$ may be obtained as the $\omega$-subsemigroup of~\Om
AV generated by $A$; it is denoted \omo AV. Elements of the free
$\omega$-semigroup are called (semigroup) \emph{$\omega$-terms}.

The generating mapping $\iota:A\to\Om AV$ extends uniquely to a
homomorphism $A^+\to\Om AV$ defined on the semigroup $A^+$ freely
generated by~$A$; it is also denoted $\iota$. For a language $L\subseteq
A^+$, we denote $\tclcv VL$ the topological closure of $\iota(L)$ in the
subspace \omo AV. A property of a pseudovariety \pv V introduced
in~\cite{Almeida&Steinberg:2000a} that plays an important role is that
of being \emph{$\omega$-full}. We take as the definition the equivalent
formulation given
in~\cite[Proposition~4.3]{Almeida&Costa&Zeitoun:2014}: a pseudovariety
\pv V is $\omega$-full if and only if the equality $\pj V(\tclcv SL) =
\tclcv VL$ holds for every regular language $L\subseteq A^+$.

The main result of this paper (Theorem~\ref{t:R-closures}) is that
\tclcv RL is recognized by a homomorphism onto an effectively
constructible finite $\omega$-semigroup. In contrast, it should be noted
that the analogous result does not hold for the pseudovariety \pv G of
all finite groups. Indeed, the variety of $\omega$-semigroups $\pv
G^\omega$ satisfies the identities $x^\omega y=y=yx^\omega$, which
forces the interpretation of the $\omega$-power in finite members
of~$\pv G^\omega$ to be the natural one. It follows that the members
of~$\pv G^\omega$ are finite groups and the subsets of $\omo
AG=A^+\cup\{1\}$ recognized by homomorphisms into members of~$\pv
G^\omega$ are the closures of \pv G-languages. Thus, members of~$\pv
G^\omega$ cannot recognize the closures in~\omo AG of arbitrary regular
languages of~$A^+$.

Similar considerations apply to the language in which the $\omega$-power
is replaced by the $(\omega-1)$-power, which is more suitable to capture
group phenomena. It is not excluded though that there is some even
richer language that will be sufficient to obtain a result similar to
our main theorem for the pseudovariety of groups. A somewhat related
phenomenon is that \pv G is not tame for the language of
$(\omega-1)$-semigroups for arbitrary finite systems of word
equations~\cite{Coulbois&Khelif:1999}. The quest for a richer language
capturing tameness is also open. We do not know if there is a connection
between the two properties, namely recognition of closures of regular
languages in \oms AV by homomorphisms into finite $\sigma$-algebras and
$\sigma$-tameness with respect to arbitrary finite systems of equations.

\section{A semigroup modeled after \texorpdfstring{\pv R}{R}}
\label{sec:R-closures}

We introduce in this section a finite semigroup which is meant to
capture certain parameters of pseudowords over~\pv R. The precise
connection is delayed until Section~\ref{sec:order}, where it plays an
important role.

Let $A$ be a finite alphabet. Consider the following pseudovarieties of
bands:
\begin{alignat*}{4}
  \pv{LRB}&=\op xyx=xy,\ x^2=x\cl&&(\mbox{left regular bands})\\
  \pv{MNB}&=\op xyxzx=xyzx,\ x^2=x\cl&\quad&(\mbox{regular bands}).
\end{alignat*}
Note that the solution of the word problem in the relatively free
semigroup \om A{LRB}, that is the identity problem for~\pv{LRB}, is
obtained by reducing each word $w$ to the canonical form which retains
from $w$ only the leftmost occurrence of each letter.

In the following result, we consider a first approximation to behavior
of pseudowords over~\pv R. This is done taking a pair where the first
component models the cumulative content, while the second registers the
order of the first occurrences of letters.

\begin{Lemma}
  \label{l:cumul-cont,LRB}
  Let $\Cl L_A$ be the subset of the Cartesian product $\Cl
  P(A)\times(\om A{LRB})^1$ consisting of all pairs $(B,u)$ such that
  $B\subseteq c(u)$. For $(B,u)$ and $(C,v)$ in~$\Cl L_A$, let
  $(B,u)(C,v)=(D,uv)$, where
  $$D=
  \begin{cases}
    B&\mbox{if } c(v)\subseteq B\\
    C&\mbox{otherwise.}
  \end{cases}
  $$
  This defines an associative operation on $\Cl L_A$ which turns it into
  a band.
\end{Lemma}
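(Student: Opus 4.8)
The plan is to verify the two claims in the statement, namely that the operation is associative and that it makes $\Cl L_A$ into a band. Before either, I would check that the operation is well defined, i.e.\ that $(D,uv)$ really lies in $\Cl L_A$: the second components multiply in $\om A{LRB}$ with $c(uv)=c(u)\cup c(v)$, so I must confirm $D\subseteq c(uv)$. In the first case $D=B\subseteq c(u)\subseteq c(uv)$ by hypothesis on $(B,u)$; in the second case $D=C\subseteq c(v)\subseteq c(uv)$ by hypothesis on $(C,v)$. So well-definedness is immediate from the two component-wise conditions.

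For the band property I would compute $(B,u)^2=(D,u^2)$. In $\om A{LRB}$ the word problem retains only the leftmost occurrence of each letter, so $u^2$ reduces to $u$; and $c(u)\subseteq B$ is false unless it is an equality, but in fact the test is whether $c(u)\subseteq B$, and since $B\subseteq c(u)$ always holds here we are in the case $c(v)\subseteq B$ precisely when $c(u)\subseteq B$, i.e.\ when $B=c(u)$. This means I should be careful: squaring gives $D=B$ when $c(u)\subseteq B$ and $D=B$ (the second component's set, which equals $B$ again) otherwise, so in either case $(B,u)^2=(B,u)$. Thus idempotency holds once associativity is established, and the band property reduces to associativity together with this short idempotency check.

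The main work, and the expected obstacle, is associativity. I would compare $\bigl((B,u)(C,v)\bigr)(E,w)$ with $(B,u)\bigl((C,v)(E,w)\bigr)$. The second components agree, being $uvw$ on both sides by associativity in $\om A{LRB}$, so only the first component requires attention. The natural approach is a case analysis on the truth values of the three inclusion tests that arise: $c(v)\subseteq B$, $c(w)\subseteq D$ (where $D$ is the left product's first component), $c(w)\subseteq C$, and $c(vw)\subseteq B$. The key structural fact that makes the cases collapse is that $c(vw)=c(v)\cup c(w)$, so $c(vw)\subseteq B$ holds iff both $c(v)\subseteq B$ and $c(w)\subseteq B$; combined with the hypotheses $C\subseteq c(v)$ and $E\subseteq c(w)$, the various branches yield the same resulting set. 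I anticipate roughly four to six cases, and the delicate ones will be those where $c(v)\subseteq B$ but $c(w)\not\subseteq B$, forcing a comparison between $C$ and $B$ as candidate outputs; here I would use $c(w)\subseteq c(vw)$ and the monotonicity of the content tests to show both bracketings select the same component. I expect no genuine obstruction beyond organizing this case analysis cleanly.
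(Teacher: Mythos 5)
Your proposal is correct and follows essentially the same route as the paper: the band property reduces to the trivial idempotency check (using $x^2=x$ in $\om A{LRB}$ and the fact that both branches of the case distinction return $B$), and associativity is settled by a case analysis on the inclusion tests, with the delicate case $c(v)\subseteq B$, $c(w)\nsubseteq B$ resolved exactly as in the paper via $C\subseteq c(v)\subseteq B$ forcing $c(w)\nsubseteq C$. The only addition is your explicit well-definedness check, which the paper leaves implicit.
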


\begin{proof}
  We first check that the operation is associative. Consider three
  elements $(B,u)$, $(C,v)$, and $(D,w)$ of~$\Cl L_A$. We verify that
  \begin{equation}
    \label{eq:cum-cont+cont-1}
    (B,u)(C,v)\cdot(D,w)=(B,u)\cdot(C,v)(D,w).
  \end{equation}
  If $c(vw)\subseteq B$, then $(B,u)(C,v)=(B,uv)$, and so
  $(B,u)(C,v)\cdot(D,w)=(B,uvw)$, while
  $(B,u)\cdot(C,v)(D,w)=(B,u)(E,vw)=(B,uvw)$, independently of the value
  of~$E$. In the remaining cases, namely when $c(vw)\nsubseteq B$,
  one of the following must hold:
  \begin{enumerate}[label=$(\roman*)$,ref=$(\roman*)$,leftmargin=*]
  \item\label{item:cumul-cont,LRB-1} $c(v)\subseteq B$;
  \item\label{item:cumul-cont,LRB-2} $c(v)\nsubseteq B$ and $c(w)\nsubseteq C$;
  \item\label{item:cumul-cont,LRB-3} $c(v)\nsubseteq B$ and
    $c(w)\subseteq C$;
  \end{enumerate}
  In case \ref{item:cumul-cont,LRB-1}, we have $c(w)\nsubseteq B$ and,
  since $C\subseteq c(v)$, we conclude that $c(w)\nsubseteq C$, which
  entails that both sides of~\eqref{eq:cum-cont+cont-1} give $(D,uvw)$.
  In case \ref{item:cumul-cont,LRB-2}, both sides
  of~\eqref{eq:cum-cont+cont-1} also give $(D,uvw)$ while in case
  \ref{item:cumul-cont,LRB-3}, they both give $(C,uvw)$.

  It is immediate that every element of $\Cl L_A$ is idempotent, so that
  $\Cl L_A$ is a band.
\end{proof}

One may ask how low $\Cl L_A$ falls in the lattice of pseudovarieties of
bands. It is not hard to show that, whenever $|A|\ge2$, the
pseudovariety generated by $\Cl L_A$ is precisely $\pv{MNB}$.

For a word $v\in A^*$ and a subset $B$ of~$A$, we let
$\mathrm{i}_B(v)$ denote the leftmost letter of~$v$ that does not belong
to~$B$, if such a letter exists, or else the empty word.

For a finite alphabet $A$, let $A^1=A\uplus\{1\}$.
Given two elements $(B,u)$ and $(C,v)$ of $\Cl L_A$, we define a
function
\begin{align*}
  \chi^B_{u,v}:A^1&\to A^1\times A^1 \\
  a &\mapsto
      \begin{cases}
        (a,\mathrm{i}_B(v)) & \text{if } a\in c(u) \vee c(v)\subseteq B \\
        (1,a) & \text{if } a\in A^1\setminus c(u) \wedge c(v)\nsubseteq B.
      \end{cases}
\end{align*}

Given two functions $f:X\to X^m$ and $g:X\to X^n$ with respective
components $f_i$ ($i=1,\ldots,m$) and $g_j$ ($j=1,\ldots,n$), let
$(f,g):X\times X\to X^{m+n}$ be defined by the formula
$$(f,g)(x,y)=(f_1(x),\ldots,f_m(x),g_1(y),\ldots,g_n(y)).$$
Further, let $\mathrm{Id}_X$ denote the identity function on the set
$X$.

\begin{Lemma}\label{l:over-associativity}
  Let $(B,u)$, $(C,v)$, and $(D,w)$ be elements of~$\Cl L_A$. Then, the
  following equality holds, where $(X,uv)=(B,u)(C,v)$:
  \begin{equation}
    \label{eq:chi-associativity}
    (\chi^B_{u,v},\mathrm{Id}_{A^1})\circ\chi^X_{uv,w}
    =(\mathrm{Id}_{A^1},\chi^C_{v,w})\circ\chi^B_{u,vw}.
  \end{equation}
\end{Lemma}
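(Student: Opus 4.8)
**The plan is to prove the identity by evaluating both sides on an arbitrary element $a \in A^1$ and checking that the resulting $3$-tuples in $(A^1)^3$ coincide.** Both sides of~\eqref{eq:chi-associativity} are functions $A^1 \to (A^1)^3$, since $\chi^B_{u,v}$ has two components and we pair it with $\mathrm{Id}_{A^1}$, then compose with $\chi^X_{uv,w}$ (which also has two components), giving three components total; symmetrically on the right. So the strategy is a direct component-wise verification. First I would unwind the definitions: the left-hand side is $(\chi^B_{u,v}, \mathrm{Id}_{A^1}) \circ \chi^X_{uv,w}$, so I apply $\chi^X_{uv,w}$ to $a$ first, obtaining a pair $(p,q) \in A^1 \times A^1$, and then apply $(\chi^B_{u,v}, \mathrm{Id})$ to that pair, yielding $(\chi^B_{u,v}(p), q)$, a triple. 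The right-hand side applies $\chi^B_{u,vw}$ to $a$ to get $(p',q')$, then applies $(\mathrm{Id}, \chi^C_{v,w})$ to get $(p', \chi^C_{v,w}(q'))$.

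Next I would organize the verification by cases according to which branch of each $\chi$-definition is taken. The definition of $\chi^B_{u,v}$ branches on the condition $a \in c(u) \vee c(v) \subseteq B$. So the natural case split is on the membership of $a$ in the relevant contents and on the subset relations among $B$, $C$, $c(v)$, $c(w)$, and $c(vw)$. Crucially, the value $X$ of the first component of $(B,u)(C,v)$ is governed by Lemma~\ref{l:cumul-cont,LRB}: $X = B$ if $c(v) \subseteq B$, and $X = C$ otherwise. I would therefore first handle the split $c(v) \subseteq B$ versus $c(v) \nsubseteq B$, since this determines both $X$ and which branch governs $\chi^X_{uv,w}$. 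Within each, I track how the condition $c(w) \subseteq X$ (needed to evaluate $\chi^X_{uv,w}$) relates to the conditions $c(vw) \subseteq B$ and $c(w) \subseteq C$ appearing on the right-hand side, using elementary facts such as $c(uv) = c(u) \cup c(v)$ and $c(vw) \subseteq B \iff (c(v) \subseteq B \wedge c(w) \subseteq B)$.

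The case analysis mirrors, and must stay consistent with, the three cases \ref{item:cumul-cont,LRB-1}--\ref{item:cumul-cont,LRB-3} used in the associativity proof of Lemma~\ref{l:cumul-cont,LRB}, together with the further subdivision on whether $c(vw) \subseteq B$. In the subcase $c(vw) \subseteq B$ one expects both sides to take their ``first branch'' uniformly and the computation collapses quickly. The delicate subcases are those where $c(v) \nsubseteq B$: here $X = C$, and I must verify that the letter-tracking data match, i.e.\ that $\mathrm{i}_X(w) = \mathrm{i}_C(w)$ and that the second-component bookkeeping (which records $\mathrm{i}$-values or copies $a$ into the $(1,a)$ branch) agrees after the two different orders of composition. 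A useful observation is that the function $\chi^B_{u,v}$ records, in its first component, whether $a$ has ``already occurred'' (via $a \in c(u)$) and, in its second, the first new letter $\mathrm{i}_B(v)$; the identity~\eqref{eq:chi-associativity} is precisely the statement that this occurrence-tracking is associative, paralleling the associativity of the underlying band operation.

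\textbf{The main obstacle will be the careful bookkeeping of the $\mathrm{i}_B$ values across the two composition orders}, especially establishing that $\mathrm{i}_X(w)$ computed on the left equals the correct $\mathrm{i}$-value surfacing on the right in the subcases where $c(v) \nsubseteq B$. The identity $\mathrm{i}_B(vw) = \mathrm{i}_B(v)$ when $c(v) \nsubseteq B$, and $\mathrm{i}_B(vw) = \mathrm{i}_B(w)$ when $c(v) \subseteq B$, will be the key lemma-level fact making the leftmost-new-letter data consistent; I would isolate and prove this small fact about $\mathrm{i}_B$ before entering the case analysis, so that each case reduces to a finite, mechanical comparison of triples. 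Apart from this, the proof is an exhaustive but routine branch-by-branch check, and I expect no conceptual difficulty beyond keeping the cases disjoint and exhaustive.
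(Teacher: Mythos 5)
Your proposal is correct and follows essentially the same route as the paper: both unwind the two composites on an arbitrary $a\in A^1$ and compare the resulting triples by an exhaustive case analysis driven by $c(v)\subseteq B$ versus $c(v)\nsubseteq B$ (which fixes $X$), the containments of $c(w)$ and $c(vw)$, and the membership of $a$ in $c(u)$, $c(v)$; the paper merely presents the check as a table. The small facts you isolate about $\mathrm{i}_B(vw)$ are exactly the bookkeeping the paper uses implicitly, so there is nothing further to add.
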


\begin{proof}
  Both sides of Equation~\eqref{eq:chi-associativity} are functions
  $A^1\to A^1\times A^1\times A^1$. We show that they coincide on each
  $a\in A^1$. Consider the following function values:
  \begin{alignat*}{3}
    (x,a_3)&=\chi^X_{uv,w}(a) &\qquad\quad&
    (a_1,a_2)&=\chi^B_{u,v}(x) \\
    (b_1,y)&=\chi^B_{u,vw}(a) &&
    (b_2,b_3)&=\chi^C_{v,w}(y).
  \end{alignat*}
  We verify that $(a_1,a_2,a_3)=(b_1,b_2,b_3)$. This is a somewhat
  tedious case-by-case calculation which is summarized in the following
  table.

{\footnotesize%
\setlength{\tabcolsep}{1.3pt}%
\renewcommand{\arraystretch}{1.7}%
$$
\begin{tabular}{|c!{\vrule width1pt}c|c|c|c|c|c|}
  \hline
  & \multicolumn{2}{c|}{$c(v)\subseteq B$}
  & \multicolumn{3}{c|}{$c(v)\nsubseteq B$}
  & $c(vw)\nsubseteq B$
  \\ \cline{2-7}
  & \multirow{2}{*}{$c(w)\subseteq B$}
  & $c(w)\nsubseteq B$
  & \multirow{2}{*}{$a\in c(u)$}
  & $c(w)\subseteq C$
  & \multicolumn{2}{c|}{$c(w)\nsubseteq C$}
  \\ \cline{3-3}\cline{5-7}
  &
  & $a\in c(u)$
  & 
  & $a\notin c(u)$
  & $a\in c(v)\setminus c(u)$
  & $a\notin c(uv)$
  \\ \Xcline{1-7}{1pt}
  $X$
  & \multicolumn{2}{c|}{$B$}
  & \multicolumn{3}{c|}{$C$}
  &
  \\ \cline{1-7}
  $(x,a_3)$
  & \multicolumn{2}{c|}{$(a,\mathrm{i}_B(w))$}
  & \multicolumn{3}{c|}{($a,\mathrm{i}_C(w))$}
  & $(1,a)$
  \\ \cline{1-7}
  $(a_1,a_2)$
  & \multicolumn{3}{c|}{$(a,\mathrm{i}_B(v))$}
  & \multicolumn{2}{c|}{$(1,a)$}
  & $(1,1)$
  \\ \cline{1-7}
  $(b_1,y)$
  & \multicolumn{3}{c|}{$(a,\mathrm{i}_B(vw))$}
  & \multicolumn{3}{c|}{$(1,a)$}
  \\ \cline{1-7}
  $(b_2,b_3)$
  & $(1,1)$
  & $(1,\mathrm{i}_B(w))$
  & $(\mathrm{i}_B(v),\mathrm{i}_C(w))$
  & \multicolumn{2}{c|}{$(a,\mathrm{i}_C(w))$}
  & $(1,a)$
  \\ \Xcline{1-7}{1pt}
  $(a_1,a_2,a_3)$
  & \multicolumn{2}{c|}{\multirow{2}{*}{$(a,1,\mathrm{i}_B(w))$}}
  & \multirow{2}{*}{$(a,\mathrm{i}_B(v),\mathrm{i}_C(w))$}
  & \multicolumn{2}{c|}{\multirow{2}{*}{$(1,a,\mathrm{i}_C(w))$}}
  & \multirow{2}{*}{$(1,1,a)$}
  \\ \cline{1-1}
  $(b_1,b_2,b_3)$
  & \multicolumn{2}{c|}{}
  &
  & \multicolumn{2}{c|}{}
  &
  \\ \cline{1-7}
\end{tabular}
$$}

  \noindent
  The conditions in each column in the top part of the table define a
  partition of all relevant cases and, with only one exception, where
  the value of $X$ remains undetermined, are sufficient to determine the
  values corresponding to the entries in the first column in the
  remainder of the table. Those values are obtained by simply applying
  the definition of the $\chi$ functions. In the last column, although,
  except for the value of~$X$, the remaining values obtained do not
  depend on whether or not $c(v)$ is contained in~$B$, it is useful to
  distinguish the two cases in the calculation. We leave it to the
  reader to check that all the values are correct.
\end{proof}

Consider a finite set $Q$. Let $\Cl B(Q)$ be the monoid of all binary
relations on~$Q$.

Given two functions $F,G\in \Cl B(Q)^{A^1}$, we denote by $F\times G$
the function $A^1\times A^1\to\Cl B(Q)$ defined by $(F\times
G)(a,b)=F(a)G(b)$.

\begin{Def}
  \label{def:Rk-triple-expansion}
  Let $R^\omega(Q,A)$ denote the set of all triples $(F,B,u)$ such that
  $F\in\Cl B(Q)^{A^1}$, $B\in\Cl P(A)$, $u\in\om A{LRB}$, $F(a)=1$ for
  all $a\in A^1\setminus c(u)$, and $B\subseteq c(u)$. For two elements
  $(F,B,u)$ and $(G,C,v)$ of $R^\omega(Q,A)$, we define their product to
  be
  $$(F,B,u)(G,C,v)=\bigl((F\times G)\circ\chi^B_{u,v},D,uv\bigr),$$
  where the product $(D,uv)=(B,u)(C,v)$ is computed in~$\Cl L_A$.
\end{Def}

The triples in Definition~\ref{def:Rk-triple-expansion} provide a
refined model of pseudowords over~\pv R, where we add a first component
to the two that were previously considered. The underlying idea is to
capture the behavior on a finite automaton of the suffix of a pseudoword
starting with the first occurrence of a given letter.

The following result is a first requirement for the above definition to be
a good choice.

\begin{Prop}
  \label{p:Rk-semigroup}
  The set $R^\omega(Q,A)$ is a semigroup for the above multiplication.
\end{Prop}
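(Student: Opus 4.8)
The plan is to establish two facts: that the product of two elements of $R^\omega(Q,A)$ again lies in $R^\omega(Q,A)$, and that the multiplication is associative. The second and third components of a product $(F,B,u)(G,C,v)$ are, by definition, the product $(D,uv)=(B,u)(C,v)$ computed in $\Cl L_A$; since $\Cl L_A$ is a semigroup by Lemma~\ref{l:cumul-cont,LRB}, these components behave associatively and close up within $\Cl P(A)\times(\om A{LRB})^1$ on their own. All the work therefore concerns the first component $F'=(F\times G)\circ\chi^B_{u,v}$.

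For closure, I would check that $F'(a)$ is the identity relation $1$ whenever $a\in A^1\setminus c(uv)$, that is, when $a\notin c(u)$ and $a\notin c(v)$, splitting according to the two clauses defining $\chi^B_{u,v}$. If $c(v)\subseteq B$, then $\chi^B_{u,v}(a)=(a,\mathrm{i}_B(v))$ with $\mathrm{i}_B(v)=1$, so $F'(a)=F(a)G(1)=1$, using $a\notin c(u)$ to get $F(a)=1$ and $1\notin c(v)$ to get $G(1)=1$. Otherwise $\chi^B_{u,v}(a)=(1,a)$ and $F'(a)=F(1)G(a)=1$, using $F(1)=1$ and $a\notin c(v)$. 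Thus $(F',D,uv)\in R^\omega(Q,A)$.

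For associativity, fix $(F,B,u),(G,C,v),(H,D,w)$ and put $(X,uv)=(B,u)(C,v)$. Expanding the left-hand side $\bigl((F,B,u)(G,C,v)\bigr)(H,D,w)$, I find that its first component evaluated at $a\in A^1$ equals $F(a_1)G(a_2)H(a_3)$, where $(x,a_3)=\chi^X_{uv,w}(a)$ and $(a_1,a_2)=\chi^B_{u,v}(x)$. Expanding the right-hand side $(F,B,u)\bigl((G,C,v)(H,D,w)\bigr)$, its first component at $a$ equals $F(b_1)G(b_2)H(b_3)$, where $(b_1,y)=\chi^B_{u,vw}(a)$ and $(b_2,b_3)=\chi^C_{v,w}(y)$. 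These are precisely the quantities appearing in Lemma~\ref{l:over-associativity}, which guarantees that $(a_1,a_2,a_3)=(b_1,b_2,b_3)$. Since $\Cl B(Q)$ is a monoid under composition of relations, the two resulting triple products coincide, so the first components of the two sides agree on every $a\in A^1$. Combined with the agreement of the remaining components noted above, this yields associativity.

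The essential obstacle---matching the triple $(a_1,a_2,a_3)$ produced by one $\chi$-composition against the triple $(b_1,b_2,b_3)$ produced by the other---is exactly the content of Lemma~\ref{l:over-associativity}, whose case analysis has already been carried out. Consequently, once the first components are rewritten as iterated relation products $F(\cdot)G(\cdot)H(\cdot)$ indexed by those triples, the present proof requires no further case distinctions: it is a clean combination of that combinatorial identity with the associativity of relation composition in $\Cl B(Q)$.
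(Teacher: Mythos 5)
Your proposal is correct and takes essentially the same approach as the paper: the associativity of the first component is reduced, via the pointwise identity $F(a_1)G(a_2)H(a_3)=F(b_1)G(b_2)H(b_3)$, to the combinatorial identity of Lemma~\ref{l:over-associativity}, with the remaining components handled by Lemma~\ref{l:cumul-cont,LRB}. The only difference is that you also verify closure of the defining conditions explicitly, a routine check the paper leaves implicit.
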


\begin{proof}
  In view of Lemma~\ref{l:cumul-cont,LRB}, associativity is expressed by
  the formula
  $$\Bigl(\bigl((F\times G)\circ\chi^B_{u,v}\bigr)\times H\Bigr)
  \circ\chi^X_{uv,w} %
  =\Bigl(F\times \bigl((G\times H)\circ\chi^C_{v,w}\bigr)\Bigr)
  \circ\chi^B_{u,vw},$$
  where $(X,uv)=(B,u)(C,v)$. Under the natural extension of the notation
  $F\times G$ to three factors, the above equality may be rewritten as
  $$(F\times G\times H)
  \circ (\chi^B_{u,v},\mathrm{Id}_{A^1})\circ\chi^X_{uv,w} =(F\times
  G\times H) \circ(\mathrm{Id}_{A^1},\chi^C_{v,w})\circ\chi^B_{u,vw}.$$
  The proposition now follows from Lemma~\ref{l:over-associativity}.
\end{proof}

The following result amounts to a simple calculation in the semigroup
$R^\omega(Q,A)$.

\begin{Lemma}\label{l:natural-omeqa-power}
  For an arbitrary element $(F,B,u)$ of~$R^\omega(Q,A)$, its natural
  $\omega$-power is given by $(F,B,u)^\omega=(F_\omega,B,u)$, where
  $$F_\omega(a)=
  \begin{cases}
    1 &\text{if } a\in A^1\setminus c(u) \\
    F(a)F(\mathrm{i}_B(u))^{\omega-1} &\text{if } a\in c(u).
  \end{cases}
  $$
\end{Lemma}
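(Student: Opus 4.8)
The plan is to compute the $\omega$-power directly from the semigroup multiplication, exploiting the fact that idempotency of the band structure forces the last two components to stabilize immediately, so that all the work concerns the first component. First I would observe that since $\Cl L_A$ is a band (Lemma~\ref{l:cumul-cont,LRB}), the pair $(B,u)$ is idempotent, whence for every positive integer $n$ the last two components of $(F,B,u)^n$ are exactly $(B,u)$. Thus $(F,B,u)^n=(F_n,B,u)$ for some $F_n\in\Cl B(Q)^{A^1}$, and it remains only to identify the first component and compute its limit. By continuity of multiplication, $(F,B,u)^\omega=\lim_n (F,B,u)^{n!}$ equals $(F_\omega,B,u)$ with $F_\omega=\lim_n F_{n!}$.

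Next I would set up the recursion for $F_{n+1}$ in terms of $F_n$. By definition of the product, $(F_n,B,u)(F,B,u)=\bigl((F_n\times F)\circ\chi^B_{u,u},B,u\bigr)$, so $F_{n+1}=(F_n\times F)\circ\chi^B_{u,u}$, that is, $F_{n+1}(a)=F_n(x)\,F(y)$ where $(x,y)=\chi^B_{u,u}(a)$. The key simplification comes from specializing the definition of $\chi^B_{u,v}$ to the case $v=u$: since $c(u)\subseteq B$ fails exactly when $B\subsetneq c(u)$, and crucially since $B\subseteq c(u)$ always holds here, I would evaluate $\mathrm{i}_B(u)$, the leftmost letter of $u$ outside $B$. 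For $a\in c(u)$ one gets $\chi^B_{u,u}(a)=(a,\mathrm{i}_B(u))$, and for $a\in A^1\setminus c(u)$ one gets $\chi^B_{u,u}(a)=(1,a)$; but in the latter case $F(a)=1$ by the defining constraint of $R^\omega(Q,A)$, so these components are frozen at $1$ for every power, which already gives the first clause of the claimed formula.

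The heart of the computation is then the case $a\in c(u)$, where the recursion reads $F_{n+1}(a)=F_n(a)\,F(\mathrm{i}_B(u))$. I would solve this elementary recursion to obtain $F_n(a)=F(a)\,F(\mathrm{i}_B(u))^{n-1}$ for $a\in c(u)$. Passing to the subsequence $n=m!$ and using that the sequence $\bigl(F(\mathrm{i}_B(u))^{m!-1}\bigr)_m$ converges in the finite monoid $\Cl B(Q)$ to $F(\mathrm{i}_B(u))^{\omega-1}$, I would conclude $F_\omega(a)=F(a)\,F(\mathrm{i}_B(u))^{\omega-1}$, matching the second clause. A small point to handle carefully is the degenerate subcase where $\mathrm{i}_B(u)$ is the empty word, i.e.\ $B=c(u)$, in which $F(\mathrm{i}_B(u))$ should be read as the identity relation $1$ and the formula collapses to $F_\omega(a)=F(a)$; I would note that this is consistent since then $\chi^B_{u,u}(a)=(a,1)$ forces $F_{n+1}=F_n$.

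The main obstacle I anticipate is purely bookkeeping rather than conceptual: one must verify that the second coordinate produced by $\chi^B_{u,u}$ is genuinely constant in $\mathrm{i}_B(u)$ across all powers (so that the same fixed element $F(\mathrm{i}_B(u))$ multiplies at each step), and that the $(x,y)$-splitting interacts correctly with the $F(a)=1$ constraint for $a\notin c(u)$. Since $\Cl B(Q)$ is only a monoid and not a group, the notation $F(\mathrm{i}_B(u))^{\omega-1}$ must be interpreted via the profinite $\omega+k$ powers recalled in Section~\ref{sec:preliminaries}; I would invoke precisely that convergence to justify the limit, rather than any group-theoretic inverse. Once these formalities are in place, the identification of both coordinates is immediate and the lemma follows.
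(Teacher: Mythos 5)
Your proposal is correct and follows essentially the same route as the paper: establish by induction that $(F,B,u)^n=(F_n,B,u)$ with $F_n(a)=F(a)F(\mathrm{i}_B(u))^{n-1}$ for $a\in c(u)$ and $F_n(a)=1$ otherwise, then pass to the limit along $n=m!$ to obtain the $(\omega-1)$-power. Your extra care with the degenerate subcase $B=c(u)$ (where $\mathrm{i}_B(u)=1$ and the recursion freezes) is a welcome detail that the paper leaves implicit.
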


\begin{proof}
  One can easily show by induction on~$n$ that, for $n>1$, we have
  $(F,B,u)^n=(F_n,B,u)$, where
  $$F_n(a)=
  \begin{cases}
    1 &\text{if } a\in A^1\setminus c(u) \\
    F(a)\bigl(F(\mathrm{i}_B(u))\bigr)^{n-1} &\text{if } a\in c(u).\popQED
  \end{cases}
  $$
\end{proof}

In case the base of the $\omega$-power is given as the product of two
elements of~$R^\omega(Q,A)$, the formula becomes somewhat more
complicated. We only sketch the routine proof, leaving the details to
the reader.

\begin{Lemma}\label{l:natural-omega-power-product}
  For arbitrary elements $(F,B,u)$ and $(G,C,v)$ of~$R^\omega(Q,A)$, the
  natural $\omega$-power of their product is given by
  $\bigl((F,B,u)(G,C,v)\bigr)^\omega=(H,D,uv)$, where $D=B$ if
  $c(v)\subseteq B$ while $D=C$ otherwise, and
  $$H(a)=
  \begin{cases}
    1 %
    & \text{if } %
    a\in A^1\setminus c(uv) \\
    \lefteqn{F(a)G(\mathrm{i}_B(v))
    \bigl(F(\mathrm{i}_D(u))G(\mathrm{i}_B(v))\bigr)^{\omega-1}}\\ %
    & \text{if } %
    a\in c(u) \wedge %
    \bigl(c(v)\subseteq B \vee %
    \mathrm{i}_D(u)\in c(u)\bigr) \\
    \lefteqn{F(a)G(\mathrm{i}_B(v))G(\mathrm{i}_C(v))^{\omega-1}}\\ %
    & \text{if } %
    a\in c(u)\wedge %
    \mathrm{i}_C(uv)\notin c(u) \wedge %
    c(v)\nsubseteq B \\
    \lefteqn{G(a)\bigl(F(\mathrm{i}_C(u))G(\mathrm{i}_B(v))\bigr)^{\omega-1}} \\ %
    & \text{if } %
    a\in c(v)\setminus c(u) \wedge %
    \mathrm{i}_C(u)\in c(u) \\
    G(a)G(\mathrm{i}_C(v))^{\omega-1} %
    & \text{if } %
    a\in c(v)\setminus c(u) \wedge %
    \mathrm{i}_C(uv)\notin c(u).
  \end{cases}
  $$
\end{Lemma}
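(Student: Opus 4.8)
The plan is to reduce the claim to a direct computation using the closed form for powers already established in Lemma~\ref{l:natural-omeqa-power}. Write $(P,D,uv)=(F,B,u)(G,C,v)$, where by Definition~\ref{def:Rk-triple-expansion} we have $P=(F\times G)\circ\chi^B_{u,v}$ and $(D,uv)=(B,u)(C,v)$ in $\Cl L_A$; in particular $D=B$ if $c(v)\subseteq B$ and $D=C$ otherwise. Applying Lemma~\ref{l:natural-omeqa-power} to the single triple $(P,D,uv)$ gives immediately that $\bigl((F,B,u)(G,C,v)\bigr)^\omega=(H,D,uv)$ with $H(a)=1$ for $a\in A^1\setminus c(uv)$ and $H(a)=P(a)\,P(\mathrm{i}_D(uv))^{\omega-1}$ for $a\in c(uv)$. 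So the entire content of the lemma is the evaluation of $P(a)$ and of the ``distinguished'' value $P(\mathrm{i}_D(uv))$ through the defining formula for $\chi^B_{u,v}$, followed by a case split.

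First I would expand $P(a)=F(x)G(y)$ where $(x,y)=\chi^B_{u,v}(a)$. By definition of $\chi^B_{u,v}$, if $a\in c(u)$ or $c(v)\subseteq B$ then $(x,y)=(a,\mathrm{i}_B(v))$, so $P(a)=F(a)G(\mathrm{i}_B(v))$; otherwise $a\notin c(u)$ and $c(v)\nsubseteq B$, giving $(x,y)=(1,a)$ and $P(a)=F(1)G(a)=G(a)$ since $F(1)=1$ (as $1\notin c(u)$, by the triple condition $F$ is the identity relation there). This already cleanly separates the four nontrivial branches of the statement into the two families $a\in c(u)$ and $a\in c(v)\setminus c(u)$. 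The remaining work is to compute the repeated factor $P(\mathrm{i}_D(uv))$, which is where the several subcases arise, since the value of $\mathrm{i}_D(uv)$ depends on both $D$ and on which letters of $u$ and $v$ lie outside $D$.

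The main obstacle, and the source of all the case distinctions in the statement, is pinning down $\mathrm{i}_D(uv)$ and evaluating $P$ at it. Here one uses that $\mathrm{i}_D(uv)$ is the leftmost letter of $uv$ outside $D$: if some letter of $u$ lies outside $D$ then $\mathrm{i}_D(uv)=\mathrm{i}_D(u)$ and this letter sits in $c(u)$, so $P(\mathrm{i}_D(u))=F(\mathrm{i}_D(u))G(\mathrm{i}_B(v))$; whereas if every letter of $u$ lies in $D$ (equivalently $\mathrm{i}_D(u)$ is empty, so $\mathrm{i}_C(uv)\notin c(u)$ in the statement's notation) the first outside letter is found in $v$, giving $P(\mathrm{i}_D(uv))=G(\mathrm{i}_C(v))$ after checking that such a letter is not in $c(u)$ and that $D=C$ in this regime. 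Cross-multiplying these two possibilities for the base of the $(\omega-1)$-power with the two possibilities for $P(a)$ produces exactly the four displayed formulas; I would also verify the boundary facts that $F(1)=1$ and that $\mathrm{i}_B(v)$, $\mathrm{i}_C(v)$, $\mathrm{i}_D(u)$ are consistently defined.

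I would finally reconcile the case labels. When $c(v)\subseteq B$ we have $D=B$ and $\mathrm{i}_D(u)=\mathrm{i}_B(u)\in c(u)$ unless $u$ has all letters in $B$, and the first branch ($\mathrm{i}_D(u)\in c(u)$) matches the condition ``$c(v)\subseteq B \vee \mathrm{i}_D(u)\in c(u)$'' in the lemma; when $c(v)\nsubseteq B$ we have $D=C$, and the alternative ``$\mathrm{i}_C(uv)\notin c(u)$'' exactly expresses that no letter of $u$ lies outside $C$, forcing the base to be computed from $v$. Checking that these four conditions partition the relevant configurations (together with the trivial case $a\in A^1\setminus c(uv)$) and that each yields the stated $H(a)$ completes the verification. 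Since the paper explicitly flags this as a \emph{routine} computation whose details are left to the reader, I would present only the expansion of $P(a)$ and of $P(\mathrm{i}_D(uv))$ and indicate the case table, rather than grind through every subcase.
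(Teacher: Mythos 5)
Your proposal is correct and follows essentially the same route as the paper: the paper's proof likewise reduces the claim to Lemma~\ref{l:natural-omeqa-power} applied to the product triple, notes the key fact that $\mathrm{i}_C(u)\in c(u)$ iff $\mathrm{i}_C(uv)\in c(u)$ (with equality of the two values in that case), and leaves the resulting case check as a routine calculation. Your expansion of $P(a)=F(x)G(y)$ via $\chi^B_{u,v}$ and of the distinguished factor $P(\mathrm{i}_D(uv))$ is exactly the computation the paper omits.
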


\begin{proof}
  Taking into account that $\mathrm{i}_C(u)\in c(u)$ if and only if
  $\mathrm{i}_C(uv)\in c(u)$, in which case
  $\mathrm{i}_C(u)=\mathrm{i}_C(uv)$, it is easy to check that the
  conditions defining each case in the expression for $H(a)$ given in
  the statement of the lemma are mutually exclusive and cover all
  possibilities. It requires then only a simple calculation using
  Lemma~\ref{l:natural-omeqa-power} to verify that the values of $H(a)$
  are correctly given in each case.
\end{proof}

\section{An alternative \texorpdfstring{$\omega$}{w}-power}
\label{sec:alt-w-power}

Consider next an $A$-labeled digraph $\Cl G=(Q,A,\delta)$, with finite
set of vertices $Q$, and labeling given by a function %
$\delta:A\to\Cl B(Q)$, which is to be interpreted as meaning that there
is an edge $p\xrightarrow aq$ if and only if $(p,q)\in\delta(a)$. The
function $\delta$ determines a continuous homomorphism $(\Om AS)^1\to\Cl
B(Q)$, which is also denoted $\delta$. We also write $q\in pw$ to
indicate that $(p,q)\in\delta(w)$.

Given a subset $B$ of $A$, we let $\varepsilon(B)=\bigcup\delta(B^*)$;
in other words, a pair $(p,q)$ of elements of~$Q$ belongs to
$\varepsilon(B)$ if and only if there is some $w\in B^*$ such that $q\in
pw$. For $u\in(\Om AS)^1$, we also let $\varepsilon(u)=\varepsilon(c(u))$.

\begin{Def}
  \label{def:Rk,G}
  We associate with the finite $A$-labeled digraph $\Cl G=(Q,A,\delta)$
  an interpretation of the $\omega$-power in~$R^\omega(Q,A)$ as follows.
  For $(F,B,u)\in R^\omega(Q,A)$, let $(F_\omega,B,u)$ be the natural
  $\omega$-power of $(F,B,u)$ in the finite semigroup $R^\omega(Q,A)$.
  Then $(F,B,u)^{[\omega]}$ is defined to be the triple $(G,c(u),u)$,
  where $G(a)=F_\omega(a)\varepsilon(u)$ for each $a\in c(u)$ and
  $G(a)=1$ for all $a\in A\setminus c(u)$. This defines a unary
  semigroup structure on $R^\omega(Q,A)$, which depends on the choice of
  labeling $\delta$. We denote this unary semigroup $R^\omega(\Cl G)$.
\end{Def}

A word of warning is perhaps needed at this point. In a unary semigroup,
we most often use the notation $x\mapsto x^\omega$ to denote the unary
operation and we also use it for the abstract operation. However, in a
finite semigroup, the standard notation is to indicate $x^\omega$ as the
idempotent power of~$x$. Since, in the unary semigroup $R^\omega(\Cl
G)$, we consider a different unary operation, the notation
$x^{[\omega]}$ has been adopted. From hereon, we talk about
$\omega$-semigroups instead of unary semigroups.

For a triple $x$ in $R^\omega(Q,A)$, let $\pi_i(x)$ denote its $i$th
component. The following proposition shows that the $\omega$-semigroup
$R^\omega(\Cl G)$ has some nice properties.

\begin{Prop}
  \label{p:Rk,G-Rk-semigroup}
  For every finite $A$-labeled digraph $\Cl G=(Q,A,\delta)$, the
  $\omega$-semigroup $R^\omega(\Cl G)$ satisfies the following identities
  of $\omega$-semigroups:
  \begin{align*}
    &(x^{\omega})^{\omega}=(x^r)^{\omega}=x^{\omega}\quad (r\ge2),\\
    &(xy)^{\omega}x=(xy)^{\omega}x^{\omega}=(xy)^{\omega}.
  \end{align*}
\end{Prop}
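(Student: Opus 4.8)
The plan is to verify each identity by direct computation with the explicit formulas for the product (Definition~\ref{def:Rk-triple-expansion}) and for the operation $x\mapsto x^{[\omega]}$ (Definition~\ref{def:Rk,G}), reading the abstract $\omega$-power in the stated identities as the unary operation $x^{[\omega]}$ of $R^\omega(\Cl G)$. Almost all of the bookkeeping will collapse thanks to two structural facts. First, the relation $\varepsilon(u)=\bigcup\delta(c(u)^*)$ is reflexive and transitive, hence idempotent: $\varepsilon(u)\varepsilon(u)=\varepsilon(u)$. Second, after a single application of $[\omega]$ the second component of a triple becomes its full content $c(u)$; since every letter of $u$ lies in $c(u)$, one has $\mathrm{i}_{c(u)}(u)=1$, and because the first component of any triple is trivial off the content, $F(1)=1$. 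These observations will force the relevant products to leave their left factor unchanged.

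I would first treat the unary identities $(x^{\omega})^{\omega}=(x^r)^{\omega}=x^{\omega}$. Writing $x=(F,B,u)$, combining Lemma~\ref{l:natural-omeqa-power} with Definition~\ref{def:Rk,G} gives $x^{[\omega]}=(G,c(u),u)$ with $G(a)=F(a)F(\mathrm{i}_B(u))^{\omega-1}\varepsilon(u)$ on $c(u)$. For $(x^{[\omega]})^{[\omega]}$ the base already has second component $c(u)$, so $\mathrm{i}_{c(u)}(u)=1$ and $G(1)=1$; thus the natural $\omega$-power leaves the first component untouched, and the extra factor $\varepsilon(u)$ introduced by $[\omega]$ is absorbed by idempotency of $\varepsilon(u)$, giving $(x^{[\omega]})^{[\omega]}=x^{[\omega]}$. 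For $(x^r)^{[\omega]}$ I would use $x^r=(F_r,B,u)$ with $F_r(a)=F(a)F(\mathrm{i}_B(u))^{r-1}$ from Lemma~\ref{l:natural-omeqa-power} and compute the natural $\omega$-power of $x^r$: putting $s=F(\mathrm{i}_B(u))$, the relevant exponent is $s^{r-1}\,(s^r)^{\omega-1}=s^{r\omega-1}$. Here the hypothesis $r\ge2$ enters through $s^{(r-1)\omega}=s^{\omega}$, which yields $s^{r\omega-1}=s^{\omega-1}$; hence $x^r$ and $x$ have the same natural $\omega$-power, and since they share the word $u$ and thus the same $\varepsilon(u)$, their $[\omega]$-powers coincide.

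For the identities $(xy)^{[\omega]}x=(xy)^{[\omega]}x^{[\omega]}=(xy)^{[\omega]}$ the argument is cleaner and, pleasantly, avoids the cumbersome first-component formula of Lemma~\ref{l:natural-omega-power-product}: one only needs to know that $(xy)^{[\omega]}=(M,c(uv),uv)$ for \emph{some} $M$. Writing $x=(F,B,u)$ (and $x^{[\omega]}=(N,c(u),u)$), I would multiply $(M,c(uv),uv)$ on the right by $x$, respectively by $x^{[\omega]}$. Since $c(u)\subseteq c(uv)$, the $\Cl L_A$-product keeps the second component equal to $c(uv)$ and, in $\om A{LRB}$, the word stays $uv$ because the trailing factor contributes no new leftmost occurrence, i.e.\ $uv\cdot u=uv$. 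Moreover, as the base set $c(uv)$ of the left factor contains $c(u)$, the function $\chi^{c(uv)}_{uv,u}$ always takes its first branch and sends every $a\in A^1$ to $(a,\mathrm{i}_{c(uv)}(u))=(a,1)$; applying $M\times(\cdot)$ then returns $M(a)F(1)=M(a)$, respectively $M(a)N(1)=M(a)$. Hence both products equal $(M,c(uv),uv)=(xy)^{[\omega]}$, establishing the two equalities simultaneously.

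The only genuinely delicate point I anticipate is the $\omega$-arithmetic $s^{r\omega-1}=s^{\omega-1}$ for $r\ge2$, which must be justified in the maximal subgroup of $\langle s\rangle$ in an arbitrary finite semigroup and is precisely where the restriction $r\ge2$ is used. Everything else reduces to the two collapsing phenomena noted above --- idempotency of $\varepsilon(u)$ and the triviality of $\mathrm{i}_{c(u)}(u)$ once the content has saturated --- so the proof amounts to careful tracking of components rather than any real combinatorial difficulty.
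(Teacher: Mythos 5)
Your proposal is correct and follows essentially the same route as the paper: direct computation from Definitions~\ref{def:Rk-triple-expansion} and~\ref{def:Rk,G}, using right-absorption of factors whose content is contained in that of the left factor (via $\mathrm{i}_{c(u)}(u)=1$ and $F(1)=1$), idempotency of $\varepsilon(u)$, and the fact that $x$ and $x^r$ have the same natural $\omega$-power. The only cosmetic difference is that the paper packages the absorption observation once and deduces both the $(xy)^{\omega}$ identities and the idempotency of $x^{[\omega]}$ from it, and simply cites $(x^r)^\omega=x^\omega$ in finite semigroups where you carry out the exponent arithmetic explicitly.
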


\begin{proof}
  Let $(F',c(u),u)=x^{[\omega]}$. We first note that, from the
  definition of the multiplication it follows that
  $(F',c(u),u)(H,D,w)=(F',c(u),u)$ for every element $(H,D,w)$
  of~$R^\omega(\Cl G)$ such that $c(w)\subseteq c(u)$. In particular, we
  obtain the identities
  $(xy)^{[\omega]}x=(xy)^{[\omega]}x^{[\omega]}=(xy)^{[\omega]}$ and
  that $x^{[\omega]}$ is idempotent. Hence, for $a\in c(u)$,
  $\pi_1\bigl((x^{[\omega]})^{[\omega]}\bigr)(a)$ is
  $F'(a)\varepsilon(u)^2$ while it is 1 at $a\in A\setminus c(u)$. Since
  the relation $\varepsilon(u)$ is idempotent, it follows that
  $(x^{[\omega]})^{[\omega]}=x^{[\omega]}$. Finally, that
  $(x^r)^{[\omega]}=x^{[\omega]}$ follows from the fact
  $(x^r)^\omega=x^\omega$ in every finite semigroup.
\end{proof}

We now consider the subset $\tilde{R}^\omega(\Cl G)$ of
$R^\omega(Q,A)$ consisting of the triples $(F,B,u)$ such that
the following conditions hold for every $a\in c(u)$:
\begin{align}
  \label{eq:simple1}
  F(a)&\subseteq\varepsilon(u);
  \\
  \label{eq:simple2}
  F(a)&\varepsilon(B)=F(a).
\end{align}
Note that Property~\eqref{eq:simple1} implies that the inclusion
$F(a)\varepsilon(u)\subseteq\varepsilon(u)$ holds.

\begin{Lemma}
  \label{l:subsemigroup}
  The set $\tilde{R}^\omega(\Cl G)$ is a subsemigroup of $R^\omega(Q,A)$.
\end{Lemma}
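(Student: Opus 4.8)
The plan is to take two arbitrary elements $(F,B,u)$ and $(G,C,v)$ of $\tilde{R}^\omega(\Cl G)$, form their product $(F,B,u)(G,C,v)=(H,D,uv)$ with $H=(F\times G)\circ\chi^B_{u,v}$ and $(D,uv)=(B,u)(C,v)$ as in Definition~\ref{def:Rk-triple-expansion}, and verify that the pair $(H,D)$ satisfies conditions~\eqref{eq:simple1} and~\eqref{eq:simple2} for every $a\in c(uv)$. Since $R^\omega(Q,A)$ is already known to be a semigroup by Proposition~\ref{p:Rk-semigroup}, the only thing to check is closure under the product, i.e.\ that these two defining inclusions are preserved.

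\medskip
First I would unwind the definition of $H(a)=F(\chi^B_{u,v}(a)_1)\,G(\chi^B_{u,v}(a)_2)$, splitting into the two branches of $\chi^B_{u,v}$. When $a\in c(u)\vee c(v)\subseteq B$, we get $H(a)=F(a)\,G(\mathrm{i}_B(v))$; when $a\in A^1\setminus c(u)\wedge c(v)\nsubseteq B$, we get $H(a)=F(1)\,G(a)=G(a)$ (using $F(1)=1$, which holds since $1\notin c(u)$ forces $F(1)=1$ in every triple). For property~\eqref{eq:simple1} I would show $H(a)\subseteq\varepsilon(uv)$: in the first branch, $F(a)\subseteq\varepsilon(u)\subseteq\varepsilon(uv)$ and $G(\mathrm{i}_B(v))\subseteq\varepsilon(v)\subseteq\varepsilon(uv)$, and since $\varepsilon(uv)=\varepsilon(c(uv))$ is the reachability relation over the submonoid generated by $c(uv)$, it is idempotent, so the product of two subsets of $\varepsilon(uv)$ stays inside $\varepsilon(uv)$; in the second branch $H(a)=G(a)\subseteq\varepsilon(v)\subseteq\varepsilon(uv)$ directly. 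The key ancillary fact here is that $c(u),c(v)\subseteq c(uv)$ yields $\varepsilon(u),\varepsilon(v)\subseteq\varepsilon(uv)$ and that $\varepsilon(uv)$ is idempotent.

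\medskip
Next, for property~\eqref{eq:simple2} I must verify $H(a)\varepsilon(D)=H(a)$. This is the step I expect to be the main obstacle, because $D$ depends on whether $c(v)\subseteq B$, and the factor $\varepsilon(D)$ must be absorbed on the right. The natural strategy is: in the second branch, $H(a)=G(a)$ and $D=C$ (since that branch requires $c(v)\nsubseteq B$), so $H(a)\varepsilon(D)=G(a)\varepsilon(C)=G(a)$ by~\eqref{eq:simple2} for $(G,C,v)$. In the first branch, the rightmost relevant factor of $H(a)$ is $G(\mathrm{i}_B(v))$, so I would want $G(\mathrm{i}_B(v))\varepsilon(D)=G(\mathrm{i}_B(v))$; when $c(v)\subseteq B$ we have $D=B$, and one checks $\mathrm{i}_B(v)$ is the empty word so $G(\mathrm{i}_B(v))=G(1)=1$, forcing $H(a)=F(a)$, whence $H(a)\varepsilon(B)=F(a)\varepsilon(B)=F(a)=H(a)$ by~\eqref{eq:simple2} for $(F,B,u)$; when $c(v)\nsubseteq B$ we have $D=C$ and $\mathrm{i}_B(v)\in c(v)$, so $G(\mathrm{i}_B(v))\varepsilon(C)=G(\mathrm{i}_B(v))$ again by~\eqref{eq:simple2} for the second factor. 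The care needed is precisely in tracking how the case split $c(v)\subseteq B$ versus $c(v)\nsubseteq B$ controls both the value $D$ and the value $\mathrm{i}_B(v)$ (empty word versus a genuine letter of $c(v)$), and in using the right-absorption property~\eqref{eq:simple2} of whichever factor carries the rightmost content; organizing the verification around this dichotomy is what makes the computation go through cleanly.
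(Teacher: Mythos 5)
Your proposal is correct and follows essentially the same route as the paper: the paper's proof verifies Property~\eqref{eq:simple2} by exactly your case split ($c(v)\subseteq B$ giving $D=B$ and $\mathrm{i}_B(v)=1$; otherwise $D=C$ with the subcases $a\in c(u)$ and $a\notin c(u)$), absorbing $\varepsilon(D)$ into whichever factor carries the rightmost content, and leaves Property~\eqref{eq:simple1} to the reader. Your additional verification of~\eqref{eq:simple1} via $\varepsilon(u),\varepsilon(v)\subseteq\varepsilon(uv)$ and idempotency of $\varepsilon(uv)$ is the intended argument and is sound.
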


\begin{proof}
  We verify only that Property~\eqref{eq:simple2} is preserved by
  multiplication, leaving it to the reader to verify that the same is
  true for Property~\eqref{eq:simple1}. Let $(F,B,u)$ and $(G,C,v)$ be
  arbitrary elements of $\tilde{R}^\omega(\Cl G)$ and consider the product
  $(H,D,uv)=(F,B,u)(G,C,v)$. We need to show that
  $H(a)\varepsilon(D)=H(a)$ for every $a\in c(uv)$.

  In case $c(v)\subseteq B$, we have $D=B$, $\mathrm{i}_B(v)=1$, and we
  may compute
  $$H(a)=F(a)G(\mathrm{i}_B(v))=F(a)
  =F(a)\varepsilon(B)=H(a)\varepsilon(D).$$
  Assume next that $c(v)\nsubseteq B$, so that $D=C$ and
  $\mathrm{i}_B(v)\in c(v)$. In case, additionally, $a\in c(u)$, we
  obtain
  $$H(a)=F(a)G(\mathrm{i}_B(v))
  =F(a)G(\mathrm{i}_B(v))\varepsilon(C)
  =H(a)\varepsilon(D).$$
  Finally, otherwise, that is when, additionally, $a\in c(uv)\setminus
  c(u)$, we get
  $$H(a)=G(a)=G(a)\varepsilon(C)=H(a)\varepsilon(D).\popQED$$
\end{proof}

Note that, for every $(F,B,u)\in\tilde{R}^\omega(\Cl G)$, its $\omega$-power
$(F,B,u)^{[\omega]}$ belongs to~$\tilde{R}^\omega(\Cl G)$. Hence,
$\tilde{R}^\omega(\Cl G)$~is in fact an $\omega$-subsemigroup of
$R^\omega(\Cl G)$. In particular, $\tilde{R}^\omega(\Cl G)$ satisfies all
the identities of Proposition~\ref{p:Rk,G-Rk-semigroup}.

\begin{Lemma}
  \label{l:general-fact}
  Let $\Cl G=(Q,A,\delta)$ be a finite $A$-labeled digraph, $B$ a subset
  of~$A$, and $s,t\in\Cl B(Q)$ be relations contained
  in~$\varepsilon(B)$. Then, the following equality holds:
  $(st)^\omega s\,\varepsilon(B)=(st)^\omega\varepsilon(B)$.
\end{Lemma}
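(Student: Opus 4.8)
The plan is to exploit the fact that $\varepsilon(B)=\bigcup\delta(B^*)$ is a reflexive and idempotent relation on~$Q$. Indeed, since $B^*$ is a submonoid of $A^*$ and $\delta$ is a homomorphism, $\varepsilon(B)$ contains the diagonal $\delta(1)=1$ and satisfies $\varepsilon(B)\varepsilon(B)=\bigcup_{w,w'\in B^*}\delta(ww')=\varepsilon(B)$. Two consequences will be used repeatedly: any finite product of relations each contained in $\varepsilon(B)$ is again contained in $\varepsilon(B)$, and, because relation composition is monotone, $s\,\varepsilon(B)\subseteq\varepsilon(B)\varepsilon(B)=\varepsilon(B)$ for every $s\subseteq\varepsilon(B)$.

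With these facts I would prove the two inclusions separately. The inclusion $(st)^\omega s\,\varepsilon(B)\subseteq(st)^\omega\varepsilon(B)$ is immediate: from $s\subseteq\varepsilon(B)$ we get $s\,\varepsilon(B)\subseteq\varepsilon(B)$, and multiplying on the left by $(st)^\omega$ preserves inclusions.

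The reverse inclusion is the substantive step. Write $e=(st)^\omega$ and let $m\ge1$ be such that $(st)^m=e$ is idempotent. Using idempotency, $e=e\cdot(st)^m=e\,(st)(st)^{m-1}=(es)\bigl(t(st)^{m-1}\bigr)$. The tail $t(st)^{m-1}$ is a product of copies of $s$ and $t$, hence contained in $\varepsilon(B)$, so monotonicity yields $e\subseteq(es)\,\varepsilon(B)=es\,\varepsilon(B)$. Multiplying on the right by $\varepsilon(B)$ and using idempotency of $\varepsilon(B)$ gives $e\,\varepsilon(B)\subseteq es\,\varepsilon(B)\,\varepsilon(B)=es\,\varepsilon(B)$, which is the desired inclusion. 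Combining the two inclusions proves the equality.

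The only delicate point, and the one I expect to be the main obstacle, is the factorization $e=(es)\bigl(t(st)^{m-1}\bigr)$: the idea is to use idempotency of $e$ to expose a single leading $s$ while keeping everything to its right inside $\varepsilon(B)$. Everything else is routine monotonicity of relation composition together with the reflexivity and idempotency of $\varepsilon(B)$.
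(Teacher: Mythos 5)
Your proof is correct and follows essentially the same route as the paper: the forward inclusion by absorbing $s\,\varepsilon(B)$ into $\varepsilon(B)$, and the reverse inclusion by using idempotency of $(st)^\omega$ to peel off a single leading $s$ and absorbing the tail $t(st)^{m-1}$ (the paper writes it as $t(st)^{\omega-1}$) into $\varepsilon(B)$. The factorization you single out as the delicate point is exactly the one-line computation the paper uses.
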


\begin{proof}
  We have already observed that the definition of $\varepsilon(B)$
  implies that $s\,\varepsilon(B)$ and $t\,\varepsilon(B)$ are both
  contained in~$\varepsilon(B)$. Hence, the relation %
  $(st)^\omega s\,\varepsilon(B)$ is certainly contained
  in~$(st)^\omega\varepsilon(B)$. The reverse inclusion is obtained by
  noting that $(st)^\omega\varepsilon(B) %
  =(st)^\omega s \cdot t(st)^{\omega-1}\varepsilon(B) %
  \subseteq (st)^\omega s\,\varepsilon(B)$.
\end{proof}

While the $\omega$-semigroup $R^\omega(\Cl G)$ in general fails the
identity $(xy)^\omega=x(yx)^\omega$, it turns out that the
$\omega$-subsemigroup $\tilde{R}^\omega(\Cl G)$ does satisfy it.

\begin{Prop}
  \label{p:problematic-identity}
  The $\omega$-semigroup $\tilde{R}^\omega(\Cl G)$ satisfies the
  identity $(xy)^\omega=x(yx)^\omega$.
\end{Prop}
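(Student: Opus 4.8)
The plan is to write $x=(F,B,u)$ and $y=(G,C,v)$ for arbitrary elements of $\tilde{R}^\omega(\Cl G)$ and to recall that, by the remark following Lemma~\ref{l:subsemigroup}, the operation $[\omega]$ of Definition~\ref{def:Rk,G} is the $\omega$-power of the $\omega$-semigroup $\tilde{R}^\omega(\Cl G)$; thus the identity to be proved reads $(xy)^{[\omega]}=x(yx)^{[\omega]}$. The first move is to dispose of the second and third components of these two triples, so as to reduce everything to a comparison of their first, $\Cl B(Q)$-valued, components. The word component of $x(yx)^{[\omega]}$ is the product $u\cdot vu$ computed in $\om A{LRB}$; since that semigroup retains only the leftmost occurrence of each letter, $uvu=uv$, which is also the word component of $(xy)^{[\omega]}$. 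For the content component, $(yx)^{[\omega]}$ has content $c(vu)=c(uv)$, and since $B\subseteq c(u)\subseteq c(uv)=c(vu)$, the $\Cl L_A$-product $(B,u)(c(uv),vu)$ of Lemma~\ref{l:cumul-cont,LRB} has content component $c(uv)$ in every case; this matches the content $c(uv)$ of $(xy)^{[\omega]}$. Finally, every triple of $R^\omega(Q,A)$ has first component $1$ off the content of its word, so both first components vanish outside $c(uv)$ and it suffices to prove that they agree at each $a\in c(uv)$.

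Next I would set up the two first-component computations. On the left, writing $xy=(H,D,uv)$ as in Definition~\ref{def:Rk-triple-expansion}, Definition~\ref{def:Rk,G} gives the first component of $(xy)^{[\omega]}$ at $a\in c(uv)$ as $\Phi(a)\,\varepsilon(uv)$, where $\Phi$ is the first component of the \emph{natural} $\omega$-power of $xy$, read off explicitly from Lemma~\ref{l:natural-omega-power-product}. On the right, I would first compute $(yx)^{[\omega]}=(K,c(uv),vu)$, where $K(b)=\Psi(b)\,\varepsilon(vu)=\Psi(b)\,\varepsilon(uv)$ for $b\in c(uv)$ and $K(1)=1$, and where $\Psi$ is the first component of the natural $\omega$-power of the \emph{reversed} product $yx$, obtained from Lemma~\ref{l:natural-omega-power-product} after swapping the two factors. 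By Definition~\ref{def:Rk-triple-expansion}, the first component of $x(yx)^{[\omega]}=(F,B,u)(K,c(uv),vu)$ at $a$ is then $F(p)\,K(q)$, where $(p,q)=\chi^B_{u,vu}(a)$. A preliminary remark to be used throughout is that, by Property~\eqref{eq:simple1}, each relation $F(\cdot)$ and $G(\cdot)$ occurring above is contained in $\varepsilon(u)$ or in $\varepsilon(v)$, hence in $\varepsilon(uv)$; moreover the diagonal relation $\mathrm{id}$ lies in every $\varepsilon(B')$, since the empty word belongs to $B'^*$ and $\delta$ maps it to $\mathrm{id}$.

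The heart of the proof is the case-by-case comparison of $\Phi(a)\,\varepsilon(uv)$ with $F(p)\,K(q)$, organized by whether or not $c(v)\subseteq B$ and by the location of $a$ (either $a\in c(u)$, or $a\in c(v)\setminus c(u)$); in each case one identifies the relevant values $\mathrm{i}_B(v)$, $\mathrm{i}_D(u)$, $\mathrm{i}_B(vu)$ and $\mathrm{i}_C(u)$ and rewrites both expressions. Two mechanisms then reconcile them. Property~\eqref{eq:simple2} absorbs the spurious factors $\varepsilon(B)$ and $\varepsilon(C)$: for instance, when $c(v)\subseteq B$ one has $\mathrm{i}_B(v)=1$, so $G(\mathrm{i}_B(v))=1$, and a residual factor $\varepsilon(uv)$ equal to $\varepsilon(B)$ is absorbed into $F(a)$ via $F(a)\,\varepsilon(B)=F(a)$; on the right the condition $q=1$ (forced by $c(vu)\subseteq B$) turns $K(q)$ into $1$ and the same absorption restores the missing $\varepsilon$-factor. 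The genuinely delicate point is that the inner group-like block produced on the left has the $F$-then-$G$ shape $(F(\mathrm{i}_D(u))G(\mathrm{i}_B(v)))^{\omega-1}$, whereas the corresponding block on the right stems from the reversed product $yx$ and so appears with its two factors in the opposite $G$-then-$F$ order, and moreover with a shifted exponent; this mismatch is exactly what makes $R^\omega(\Cl G)$ fail the identity in general.

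The main obstacle is thus to show that these two mismatches dissolve after right-multiplication by $\varepsilon(uv)$, and here Lemma~\ref{l:general-fact} is the decisive tool. Since the relevant relations $s=F(\cdot)$ and $t=G(\cdot)$ both lie in $\varepsilon(uv)$, applying the lemma with $B:=c(uv)$ gives $(st)^\omega s\,\varepsilon(uv)=(st)^\omega\,\varepsilon(uv)$, which lets me slide a factor across an idempotent power and convert an $(st)^\omega$-block into a $(ts)^\omega$-block under the $\varepsilon(uv)$ factor; this handles the reordering. The instance $t:=\mathrm{id}$ (legitimate because $\mathrm{id}\subseteq\varepsilon(uv)$) yields $s^{\omega+1}\varepsilon(uv)=s^\omega\varepsilon(uv)$, whence $s^{\omega+k}\varepsilon(uv)=s^\omega\varepsilon(uv)$ for all $k\ge0$, and since $s^{\omega-1}=s^{\omega+m-1}$ with $m\ge1$ the period of $s$, this also gives $s^{\omega-1}\varepsilon(uv)=s^\omega\varepsilon(uv)$; this absorbs the exponent shift between the two sides. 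The remaining work is bookkeeping: aligning the $\mathrm{i}$-indices in each case so that Lemma~\ref{l:general-fact} is applied to the correct pair of relations, after which the two expressions coincide at every $a\in c(uv)$ and the identity $(xy)^{[\omega]}=x(yx)^{[\omega]}$ follows.
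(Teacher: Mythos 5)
Your proposal follows essentially the same route as the paper's proof: compute both sides explicitly via Lemma~\ref{l:natural-omega-power-product} (applied to $xy$ and to the reversed product $yx$), append the $\varepsilon(uv)$ factor coming from the $[\omega]$-power, and reconcile the resulting case-by-case expressions using Property~\eqref{eq:simple2} to absorb $\varepsilon$-factors and Lemma~\ref{l:general-fact} to fix the factor-reordering and exponent shift. The case analysis you defer as ``bookkeeping'' is exactly what the paper carries out in its Cases 1--5b, so the plan is sound and matches the published argument.
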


\begin{proof}
  Let $(F,B,u)$ and $(G,C,v)$ be arbitrary elements
  of~$\tilde{R}^\omega(\Cl G)$ and consider the corresponding
  expressions
  \begin{align*}
    (\tilde{H},c(uv),uv) &= \bigl((F,B,u)(G,C,v)\bigr)^{[\omega]} \\
    (\tilde{I},c(uv),vu) &= \bigl((G,C,v)(F,B,u)\bigr)^{[\omega]} \\
    (J,c(uv),uv) &= (F,B,u)\bigl((G,C,v)(F,B,u)\bigr)^{[\omega]}.
  \end{align*}
  Then, taking into account Lemma~\ref{l:natural-omega-power-product},
  we may compute
  $$\tilde{H}(a)=
  \begin{cases}
    1 %
    & \text{if } %
    a\in A^1\setminus c(uv) \\
    \lefteqn{F(a)G(\mathrm{i}_B(v))
      \bigl(F(\mathrm{i}_D(u))G(\mathrm{i}_B(v))\bigr)^{\omega-1} %
      \varepsilon(uv)}\\ %
    & \text{if } %
    a\in c(u) \wedge %
    \bigl(c(v)\subseteq B \vee %
    \mathrm{i}_D(u)\in c(u)\bigr) \\
    \lefteqn{F(a)G(\mathrm{i}_B(v))G(\mathrm{i}_C(v))^{\omega-1} %
      \varepsilon(uv)}\\ %
    & \text{if } %
    a\in c(u)\wedge %
    \mathrm{i}_C(uv)\notin c(u) \wedge %
    c(v)\nsubseteq B \\
    \lefteqn{G(a)\bigl(F(\mathrm{i}_C(u))G(\mathrm{i}_B(v))\bigr)^{\omega-1} %
      \varepsilon(uv)}\\ %
    & \text{if } %
    a\in c(v)\setminus c(u) \wedge %
    \mathrm{i}_C(u)\in c(u) \\
    G(a)G(\mathrm{i}_C(v))^{\omega-1} %
    \varepsilon(uv) %
    & \text{if } %
    a\in c(v)\setminus c(u) \wedge %
    \mathrm{i}_C(uv)\notin c(u)
  \end{cases}
  $$
  and, dually,
  $$\tilde{I}(a)=
  \begin{cases}
    1 
    & \text{if } %
    a\in A^1\setminus c(vu) \\
    \lefteqn{G(a)F(\mathrm{i}_C(u))
    \bigl(G(\mathrm{i}_E(v))F(\mathrm{i}_C(u))\bigr)^{\omega-1} %
    \varepsilon(uv)}\\ %
    & \text{if } %
    a\in c(v) \wedge %
    \bigl(c(u)\subseteq C \vee %
    \mathrm{i}_E(v)\in c(v)\bigr) \\
    \lefteqn{G(a)F(\mathrm{i}_C(u))F(\mathrm{i}_B(u))^{\omega-1} %
    \varepsilon(uv)}\\ %
    & \text{if } %
    a\in c(v)\wedge %
    \mathrm{i}_B(vu)\notin c(v) \wedge %
    c(u)\nsubseteq C \\
    \lefteqn{F(a)\bigl(G(\mathrm{i}_B(v))F(\mathrm{i}_C(u))\bigr)^{\omega-1} %
    \varepsilon(uv)}\\ %
    & \text{if } %
    a\in c(u)\setminus c(v) \wedge %
    \mathrm{i}_B(v)\in c(v) \\
      F(a)F(\mathrm{i}_B(u))^{\omega-1} %
    \varepsilon(uv) 
    & \text{if } %
    a\in c(u)\setminus c(v) \wedge %
    \mathrm{i}_B(vu)\notin c(v)
  \end{cases}
  $$
  from which it follows that
  $$J(a)=
  \begin{cases}
    1 %
    & \text{if } %
    a\in A^1\setminus c(uv) \\
    F(a) %
    & \text{if } %
    a\in c(uv)=B %
    \\
    \lefteqn{F(a)G(\mathrm{i}_B(v))F(\mathrm{i}_C(u))
    \bigl(G(\mathrm{i}_E(v))F(\mathrm{i}_C(u))\bigr)^{\omega-1} %
    \varepsilon(uv)}\\ %
    & \text{if } %
    a\in c(u) \wedge c(v)\nsubseteq B
    \\
    \lefteqn{F(a)F(\mathrm{i}_B(u))^\omega %
    \varepsilon(uv)} \\ %
    & \text{if } %
    a\in c(u)\wedge\mathrm{i}_B(vu)\in c(u)\setminus c(v)
    \\
    \lefteqn{G(a)F(\mathrm{i}_C(u))
    \bigl(G(\mathrm{i}_E(v))F(\mathrm{i}_C(u))\bigr)^{\omega-1} %
    \varepsilon(uv)} \\ %
    & \text{if } %
    a\in c(v)\setminus c(u) \wedge %
    \bigl(c(u)\subseteq C \vee %
    \mathrm{i}_E(v)\in c(v)\bigr) 
  \end{cases}
  $$
  It remains to show that $J(a)=\tilde{H}(a)$ for every $a\in A$. We
  test the equality following the separation in cases in the above
  description of $J$.

  \smallskip
  
  \emph{Case 1.} In case $a\in A^1\setminus c(uv)$, we get
  $J(a)=1=\tilde{H}(a)$.

  \smallskip
  
  \emph{Case 2.} Suppose now that $a\in c(uv)=B$. Since $B\subseteq
  c(u)$, it follows that $c(v)\subseteq B=c(u)$, which yields $D=B$ and
  $\mathrm{i}_D(u)=\mathrm{i}_B(v)=1$. Hence, the only possible
  alternative in the above description of~$\tilde{H}$ is the second one.
  Moreover, it gives $\tilde{H}(a)=F(a)\varepsilon(uv)$. Since $(F,B,u)$
  belongs to~$\tilde{R}^\omega(\Cl G)$ and $B=c(uv)$, we do have
  $\tilde{H}(a)=F(a)\varepsilon(uv)=F(a)=J(a)$.

  \smallskip
  
  \emph{Case 3.} Suppose next that $a\in c(u)$ and $c(v)\nsubseteq B$.
  The latter assumption implies that $D=C$ and
  $\mathrm{i}_B(vu)=\mathrm{i}_B(v)$. There are now two possibilities,
  the first of which is to fall in Case~2 of~$\tilde{H}$, with
  $\mathrm{i}_D(u)\in c(u)$, that is, $c(u)\nsubseteq D=C$, which
  entails $E=B$. In this case, we obtain
  $J(a)=F(a)G(\mathrm{i}_B(v))F(\mathrm{i}_C(u)) %
  \bigl(G(\mathrm{i}_B(v))F(\mathrm{i}_C(u))\bigr)^{\omega-1}%
  \varepsilon(uv) $ %
  while %
  $\tilde{H}(a)=F(a)G(\mathrm{i}_B(v)) %
  \bigl(F(\mathrm{i}_C(u))G(\mathrm{i}_B(v))\bigr)^{\omega-1} %
  \varepsilon(uv) $ %
  so that the equality $J(a)=\tilde{H}(a)$ follows from
  Lemma~\ref{l:general-fact}. Alternatively, we fall in Case~3
  of~$\tilde{H}$, with $\mathrm{i}_C(uv)\notin c(u)$, which yields
  $c(u)\subseteq C$, so that $E=C$, and $\mathrm{i}_C(u)=1$. Hence, we
  obtain directly
  $J(a)=F(a)G(\mathrm{i}_B(v))G(\mathrm{i}_C(v))^{\omega-1} %
  \varepsilon(uv) %
  =\tilde{H}(a)$.

  \smallskip
  
  \emph{Case 4.} Assume now that $a\in c(u)$ and $\mathrm{i}_B(vu)\in
  c(u)\setminus c(v)$. The second condition implies that
  $\mathrm{i}_B(v)=1$, that is, $c(v)\subseteq B$, whence $D=B$, and
  $\mathrm{i}_B(vu)=\mathrm{i}_B(u)$. This means that we are in Case~2
  of~$\tilde{H}$ and we obtain %
  $\tilde{H}(a)=F(a)F(\mathrm{i}_B(u))^{\omega-1}\varepsilon(uv)$ while
  $\tilde{H}(a)=F(a)F(\mathrm{i}_B(u))^\omega\varepsilon(uv)$ and so the
  equality $\tilde{H}(a)=J(a)$ follows from Lemma~\ref{l:general-fact}.

  \smallskip
  
  \emph{Case 5a.} Here, we consider the case where $a\in c(v)\setminus
  c(u)$ and $c(u)\subseteq C$. The latter condition means that
  $\mathrm{i}_C(u)=1$ and implies that $E=C$. This falls in Case~5
  of~$\tilde{H}$ and we obtain %
  $\tilde{H}(a)=G(a)G(\mathrm{i}_C(v))^{\omega-1} %
  \varepsilon(uv) %
  =J(a)$.
  
  \smallskip
  
  \emph{Case 5b.} Assume finally that $a\in c(v)\setminus c(u)$,
  $c(u)\nsubseteq C$, and $\mathrm{i}_E(v)\in c(v)$. Since
  $c(u)\nsubseteq C$, we have $E=B$. Taking into account that
  $\mathrm{i}_E(v)\in c(v)$, we deduce that $D=C$. We fall in Case~4
  of~$\tilde{H}$, which gives the equality %
  $\tilde{H}(a)=G(a)\bigl(F(\mathrm{i}_C(u))G(\mathrm{i}_B(v))\bigr)^{\omega-1} %
  \varepsilon(uv)$ %
  while Case~5 of~$J$ provides the formula %
  $J(a)=G(a)F(\mathrm{i}_C(u)) %
  \bigl(G(\mathrm{i}_B(v))F(\mathrm{i}_C(u))\bigr)^{\omega-1} %
  \varepsilon(uv)$. %
  Applying Lemma~\ref{l:general-fact}, we conclude that
  $\tilde{H}(a)=J(a)$.
\end{proof}

Combining Proposition~\ref{p:problematic-identity} with
Proposition~\ref{p:Rk,G-Rk-semigroup}, we are led to the following key
result.

\begin{Prop}
  \label{p:Rk,G-Rk-semigroup-really!}
  The $\omega$-semigroup $\tilde{R}^\omega(\Cl G)$ belongs to the
  variety $\pv R^\omega$.
\end{Prop}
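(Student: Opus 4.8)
The plan is to place $\tilde{R}^\omega(\Cl G)$ in $\pv R^\omega$ by checking that it satisfies a basis of $\omega$-identities for this variety. Recall that a finite $\omega$-semigroup lies in $\pv R^\omega$ precisely when it satisfies every $\omega$-identity valid over~$\pv R$. I would therefore begin by recording a finite set of such identities known to axiomatize $\pv R^\omega$, as extracted from the solution of the word problem for $\omega$-terms over~$\pv R$ in~\cite{Almeida&Zeitoun:2003b}: besides the laws governing the $\omega$-power itself ($x^\omega x^\omega = x^\omega$, $(x^\omega)^\omega = x^\omega$, and $(x^r)^\omega = x^\omega$ for $r\ge 2$), these are the $\Cl R$-triviality law $(xy)^\omega x = (xy)^\omega$ together with the conjugacy law $(xy)^\omega = x(yx)^\omega$.

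The core observation is that every one of these identities has already been verified for $\tilde{R}^\omega(\Cl G)$ in the two preceding propositions. Indeed, Proposition~\ref{p:Rk,G-Rk-semigroup} supplies $(x^\omega)^\omega = x^\omega$, $(x^r)^\omega = x^\omega$, and $(xy)^\omega x^\omega = (xy)^\omega x = (xy)^\omega$ (from which $x^\omega x^\omega = x^\omega$ follows by taking $y=x$ in $(xy)^\omega x^\omega = (xy)^\omega$ and using $(x^2)^\omega = x^\omega$), while Proposition~\ref{p:problematic-identity} supplies the remaining conjugacy law $(xy)^\omega = x(yx)^\omega$. I would stress here why the latter cannot be dispensed with: since the operation is the \emph{non-natural} power $x^{[\omega]}$, the universal identity $(xy)^\omega x = x(yx)^\omega$ — valid in $\pv S^\omega$ and hence trivially yielding conjugacy from $\Cl R$-triviality in the natural setting — is not available, and in fact fails in $R^\omega(\Cl G)$. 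This is exactly the gap that Proposition~\ref{p:problematic-identity}, valid only on the subsemigroup $\tilde{R}^\omega(\Cl G)$, is designed to fill.

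With the basis in hand the argument concludes in one line: $\tilde{R}^\omega(\Cl G)$ is a finite $\omega$-semigroup satisfying all the defining identities of $\pv R^\omega$, hence it belongs to $\pv R^\omega$. The only delicate point — and the step I expect to require the most care — is to certify that the listed identities are genuinely \emph{complete}, that is, that they form (or generate) a basis for $\pv R^\omega$ rather than merely a set of valid laws; this is where the dependence on the axiomatization obtained from~\cite{Almeida&Zeitoun:2003b} is essential. Should the cited basis contain any further law beyond those listed, each such law would have to be derived from the identities above (all of them pure $\omega$-identities in at most two variables) or checked directly on the triples $(F,B,u)$, but no genuinely new computation on $\Cl G$ beyond those already performed in this section should then be needed.
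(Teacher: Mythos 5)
Your proposal is correct and follows exactly the paper's route: the paper's proof simply invokes Theorem~6.1 of~\cite{Almeida&Zeitoun:2003b}, which states that the identities established in Propositions~\ref{p:Rk,G-Rk-semigroup} and~\ref{p:problematic-identity} define the variety $\pv R^\omega$, and notes that $\tilde{R}^\omega(\Cl G)$ satisfies all of them. Your additional remarks on why the conjugacy law $(xy)^\omega = x(yx)^\omega$ is the genuinely delicate identity, and on the need for the cited basis to be complete, are accurate glosses on the same argument rather than a different one.
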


\begin{proof}
  It remains to invoke the result
  from~\cite[Theorem~6.1]{Almeida&Zeitoun:2003b} that the identities in
  Propositions~\ref{p:Rk,G-Rk-semigroup}
  and~\ref{p:problematic-identity} define the variety $\pv R^\omega$.
\end{proof}

We introduce a further restriction on the elements
of~$\tilde{R}^\omega(\Cl G)$, namely we consider the subset $S^\omega(\Cl
G)$ consisting of the elements $(F,B,u)$ of~$\tilde{R}^\omega(\Cl G)$ such
that
\begin{align}
  \label{eq:elusive2}
  X\subseteq Y\subseteq A &\implies
  F(\mathrm{i}_X(u))\subseteq\varepsilon(Y)F(\mathrm{i}_Y(u)).
\end{align}

\begin{Prop}
  \label{p:elusive}
  The set $S^\omega(\Cl G)$ is an $\omega$-subsemigroup of
  $R^\omega(\Cl G)$.
\end{Prop}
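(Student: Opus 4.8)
The plan is to exploit that $S^\omega(\Cl G)$ sits inside $\tilde{R}^\omega(\Cl G)$, which by the discussion following Lemma~\ref{l:subsemigroup} is already an $\omega$-subsemigroup of $R^\omega(\Cl G)$ whose elements satisfy Properties~\eqref{eq:simple1} and~\eqref{eq:simple2}. Thus it suffices to check that the defining condition~\eqref{eq:elusive2} is preserved under both the product of Definition~\ref{def:Rk-triple-expansion} and the $\omega$-power $[\omega]$ of Definition~\ref{def:Rk,G}. Throughout I will freely use that $X\subseteq Y$ implies $\varepsilon(X)\subseteq\varepsilon(Y)$, that each relation $\varepsilon(Y)$ is idempotent and contains the diagonal $1$, and that $F(a)\subseteq\varepsilon(u)$ for $a\in c(u)$ with $\varepsilon(u)\subseteq\varepsilon(Y)$ whenever $c(u)\subseteq Y$.

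The core combinatorial input for the product $(H,D,uv)=(F,B,u)(G,C,v)$ is the behaviour of the index function under the canonical form in $\om A{LRB}$: since $uv$ lists the first occurrences of $u$ followed by the new letters of $v$, one has $\mathrm{i}_X(uv)=\mathrm{i}_X(u)$ when $c(u)\nsubseteq X$, and $\mathrm{i}_X(uv)=\mathrm{i}_X(v)$ when $c(u)\subseteq X$. Combined with the explicit formula $H(a)=F(a)G(\mathrm{i}_B(v))$ for $a\in c(u)$ or $c(v)\subseteq B$, and $H(a)=G(a)$ otherwise, this partitions the verification of~\eqref{eq:elusive2} for $(H,D,uv)$ according to whether $c(u)$ is contained in $X$, in $Y$ but not $X$, or in neither. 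When both $\mathrm{i}_X(uv)$ and $\mathrm{i}_Y(uv)$ land in the $u$-part (respectively the $v$-part), the required inclusion reduces, by right-multiplying by the common factor $G(\mathrm{i}_B(v))$ where needed, to~\eqref{eq:elusive2} for $F$ (respectively directly to~\eqref{eq:elusive2} for $G$).

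I expect the genuinely delicate case to be the mixed one, where $c(u)\nsubseteq X$ but $c(u)\subseteq Y$, so that $\mathrm{i}_X(uv)=\mathrm{i}_X(u)$ lies in the $u$-part while $\mathrm{i}_Y(uv)=\mathrm{i}_Y(v)$ lies in the $v$-part. Here the two sides of~\eqref{eq:elusive2} involve genuinely different components $F$ and $G$, and the bridge is provided by observing that $B\subseteq c(u)\subseteq Y$, so that~\eqref{eq:elusive2} for $G$ applied to $B\subseteq Y$ gives $G(\mathrm{i}_B(v))\subseteq\varepsilon(Y)G(\mathrm{i}_Y(v))$; one then absorbs the leftover factor $F(\mathrm{i}_X(u))\subseteq\varepsilon(u)\subseteq\varepsilon(Y)$ into $\varepsilon(Y)$ using idempotency. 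The boundary situations in which some index equals the empty word are handled uniformly by noting that $\mathrm{i}_X(uv)=1$ forces $c(uv)\subseteq X\subseteq Y$, whence $\mathrm{i}_Y(uv)=1$ as well and the inclusion degenerates to $1\subseteq\varepsilon(Y)$.

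Finally, for the $\omega$-power $(F,B,u)^{[\omega]}=(G,c(u),u)$, the word $u$ is unchanged, so the indices $\mathrm{i}_X(u)$ are the same on both sides and, by Lemma~\ref{l:natural-omeqa-power} and Definition~\ref{def:Rk,G}, $G(a)=F(a)F(\mathrm{i}_B(u))^{\omega-1}\varepsilon(u)$ for $a\in c(u)$. When both indices are letters of $c(u)$, the desired inclusion follows at once by right-multiplying~\eqref{eq:elusive2} for $F$ by the common factor $F(\mathrm{i}_B(u))^{\omega-1}\varepsilon(u)$; the degenerate cases where an index is $1$ are disposed of as above, using $F(a)\subseteq\varepsilon(u)$ and the idempotency of $\varepsilon(u)\subseteq\varepsilon(Y)$. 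Since~\eqref{eq:simple1} and~\eqref{eq:simple2} are already known to be inherited, this will complete the proof that $S^\omega(\Cl G)$ is an $\omega$-subsemigroup.
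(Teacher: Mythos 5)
Your proposal is correct and follows essentially the same route as the paper: reduce to showing that Property~\eqref{eq:elusive2} is preserved by the product and the $[\omega]$-power, split according to whether $\mathrm{i}_X(uv)$ and $\mathrm{i}_Y(uv)$ fall in the $u$-part or the $v$-part, and resolve the mixed case via \eqref{eq:elusive2} for $G$ applied to $B\subseteq Y$ together with absorption into $\varepsilon(Y)$. The only (immaterial) variations are that the paper absorbs $F(\mathrm{i}_X(u))$ into $\varepsilon(Y)$ by applying \eqref{eq:elusive2} for $F$ with $F(\mathrm{i}_Y(u))=1$ rather than via \eqref{eq:simple1}, and treats the $\omega$-power by observing that $(F,B,u)^\omega$ is a finite power (hence already satisfies \eqref{eq:elusive2} by the product case) before appending $\varepsilon(u)$, instead of invoking the explicit formula of Lemma~\ref{l:natural-omeqa-power}.
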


\begin{proof}
  Consider two elements $(F,B,u)$ and $(G,C,v)$ of~$R^\omega(Q,A)$ and
  their product $(H,D,uv)=(F,B,u)(G,C,v)$.
  
  Suppose that $(F,B,u)$ and $(G,C,v)$ satisfy
  Property~\eqref{eq:elusive2}. We claim that so does their product
  $(H,D,uv)$.

  Consider subsets $X$ and $Y$ of~$A$ such that $X\subseteq Y$. Assume
  first that $c(v)\subseteq B$, so that $c(v)\subseteq c(u)$,
  $\mathrm{i}_Y(uv)=\mathrm{i}_Y(u)$, and
  $$H(\mathrm{i}_X(uv))=F(\mathrm{i}_X(uv))G(\mathrm{i}_B(v)) %
  \text{ and } %
  H(\mathrm{i}_Y(uv))=F(\mathrm{i}_Y(uv))G(\mathrm{i}_B(v)).$$
  If $\mathrm{i}_X(uv)\notin c(u)$, then $\mathrm{i}_Y(uv)\notin c(u)$
  must also hold and $F(\mathrm{i}_X(uv))=1=F(\mathrm{i}_Y(uv))$, whence
  $H(\mathrm{i}_X(uv))=H(\mathrm{i}_Y(uv))
  \subseteq\varepsilon(Y)H(\mathrm{i}_Y(uv))$. %
  On the other hand, if $\mathrm{i}_X(uv)\in c(u)$, then we have
  $\mathrm{i}_X(uv)=\mathrm{i}_X(u)$.
  Hence, we may apply the assumption that $(F,B,u)$ satisfies
  Property~\eqref{eq:elusive2} to deduce that
  $$H(\mathrm{i}_X(uv)) %
  =F(\mathrm{i}_X(u))G(\mathrm{i}_B(v)) %
  \subseteq\varepsilon(Y)F(\mathrm{i}_Y(u))G(\mathrm{i}_B(v)) %
  =\varepsilon(Y)H(\mathrm{i}_Y(uv)).$$
  
  Assume next that $c(v)\nsubseteq B$. In case $\mathrm{i}_X(uv)\notin
  c(u)$, then $\mathrm{i}_Y(uv)\notin c(u)$ also holds, and we obtain
  $$H(\mathrm{i}_X(uv)) %
  =G(\mathrm{i}_X(v)) %
  \subseteq\varepsilon(Y)G(\mathrm{i}_Y(v)) %
  =\varepsilon(Y)H(\mathrm{i}_Y(uv)).$$
  We may, therefore, assume that $\mathrm{i}_X(uv)\in c(u)$. The
  additional assumption that $\mathrm{i}_Y(uv)\notin c(u)$ entails that
  $c(u)\subseteq Y$ and so $B\subseteq Y$, as $B\subseteq c(u)$. Since
  both $(F,B,u)$ and $(G,C,v)$ satisfy Property~\eqref{eq:elusive2}, we
  may deduce the following relations:
  \begin{align*}
    H(\mathrm{i}_X(uv)) %
    &=F(\mathrm{i}_X(u))G(\mathrm{i}_B(v)) %
    \subseteq\varepsilon(Y)F(\mathrm{i}_Y(u))G(\mathrm{i}_B(v)) %
    =\varepsilon(Y)G(\mathrm{i}_B(v)) \\
    &\subseteq\varepsilon(Y)\cdot\varepsilon(Y)G(\mathrm{i}_Y(v)) %
    =\varepsilon(Y)H(\mathrm{i}_Y(uv)).
  \end{align*}
  It remains to consider the case where both $\mathrm{i}_X(uv)$ and
  $\mathrm{i}_Y(uv)$ belong to~$c(u)$. Taking into account that
  $(F,B,u)$ satisfies Property~\eqref{eq:elusive2}, we obtain:
  $$H(\mathrm{i}_X(uv)) %
  =F(\mathrm{i}_X(u))G(\mathrm{i}_B(v)) %
  \subseteq\varepsilon(Y)F(\mathrm{i}_Y(u))G(\mathrm{i}_B(v)) %
  =\varepsilon(Y)H(\mathrm{i}_Y(uv)).$$

  To conclude the proof, we must show that the $\omega$-power
  $(I,c(u),u)=(F,B,u)^{[\omega]}$ satisfies Property~\eqref{eq:elusive2}
  if so does $(F,B,u)$. Let $(F_\omega,B,u)=(F,B,u)^\omega$, so that the
  function $I$ is given by the formula $I(a)=F_\omega(a)\varepsilon(u)$
  if $a\in c(u)$ and $I(a)=1$ otherwise. Let $X$ and $Y$ be such that
  $X\subseteq Y\subseteq A$. In case $c(u)\subseteq X$, we get
  $$I(\mathrm{i}_X(u)) %
  =1 %
  \subseteq\varepsilon(Y)1 %
  =\varepsilon(Y)I(\mathrm{i}_Y(u)).
  $$
  For the remainder of the proof, we assume that $c(u)\nsubseteq X$. From
  the assumption that $(F,B,u)$ satisfies Property~\eqref{eq:elusive2}
  and the previous step of the proof, we know that
  $(F_\omega,B,u)=(F,B,u)^\omega$, which is a finite power of~$(F,B,u)$,
  also satisfies Property~\eqref{eq:elusive2}. Hence, we obtain
  $$I(\mathrm{i}_X(u)) %
  =F_\omega(\mathrm{i}_X(u))\varepsilon(u) %
  \subseteq \varepsilon(Y)F_\omega(\mathrm{i}_Y(u))\varepsilon(u).$$
  In case $c(u)\nsubseteq Y$, the rightmost expression in the preceding
  inclusion is equal to $\varepsilon(Y)I(\mathrm{i}_Y(u))$. Otherwise,
  that expression reduces to $\varepsilon(Y)\varepsilon(u)$ and
  $$\varepsilon(Y)\varepsilon(u) %
  \subseteq\varepsilon(Y)\varepsilon(Y) %
  =\varepsilon(Y) %
  =\varepsilon(Y)I(\mathrm{i}_Y(u)),$$
  which concludes the proof.
\end{proof}

\section{A natural partial order and generators}
\label{sec:order}

Given two elements $x$ and $y$ of~$R^\omega(Q,A)$, we write $x\le y$ if
$\pi_1(x)\subseteq\pi_1(y)$, $\pi_2(x)\subseteq\pi_2(y)$, and
$\pi_3(x)=\pi_3(y)$. This defines a partial order on $R^\omega(Q,A)$.

\begin{Prop}
  \label{p:order-stability}
  The order $\le$ is stable under multiplication on the left. The
  restriction of the order $\le$ to $S^\omega(\Cl G)$ is stable
  under multiplication on the right.
\end{Prop}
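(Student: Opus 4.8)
The plan is, in each case, to compare the three coordinates of the two products against the requirements of the definition of $\le$. Write the ordered pair as $x=(F,B,u)\le y=(F',B',u)$, so that $F(a)\subseteq F'(a)$ for every $a\in A^1$, $B\subseteq B'$, and the third coordinates agree; let $z=(G,C,v)$ be the element we multiply by. In every product the third coordinate is the $\Cl L_A$-product of third coordinates, which is unaffected by replacing $x$ by $y$, so those always coincide. The second coordinate is computed in $\Cl L_A$ (Lemma~\ref{l:cumul-cont,LRB}); checking that it is ordered by inclusion is a short case analysis on which branch of the $\Cl L_A$-multiplication is taken, using $B\subseteq B'$ and the structural constraint $C\subseteq c(v)$. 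Thus the substance of both statements lies in the first coordinate.

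For left stability I compare the first coordinates of $zx=(G,C,v)(F,B,u)$ and $zy=(G,C,v)(F',B',u)$, namely $(G\times F)\circ\chi^C_{v,u}$ and $(G\times F')\circ\chi^C_{v,u}$. Since $x$ and $y$ share the third coordinate $u$, the map $\chi^C_{v,u}$ is literally the same in both products. Evaluating at $a$ gives either $G(a)F(\mathrm{i}_C(u))$ against $G(a)F'(\mathrm{i}_C(u))$, or $F(a)$ against $F'(a)$, and in both cases the pointwise inclusion $F\subseteq F'$ yields the containment. No further hypotheses are needed, which is why this half holds on all of $R^\omega(Q,A)$.

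For right stability I compare the first coordinates of $xz=(F,B,u)(G,C,v)$ and $yz=(F',B',u)(G,C,v)$, namely $(F\times G)\circ\chi^B_{u,v}$ and $(F'\times G)\circ\chi^{B'}_{u,v}$. The subtlety, which I expect to be the main obstacle, is that $\chi^B_{u,v}$ and $\chi^{B'}_{u,v}$ need not agree, since $\mathrm{i}_B(v)$ and $\mathrm{i}_{B'}(v)$ may differ when $B\subsetneq B'$. I would split on whether $a\in c(u)$. For $a\in c(u)$ the coordinates are $F(a)G(\mathrm{i}_B(v))$ and $F'(a)G(\mathrm{i}_{B'}(v))$, and here the defining properties of $S^\omega(\Cl G)$ enter: Property~\eqref{eq:elusive2} applied to $z$ with $X=B\subseteq Y=B'$ gives $G(\mathrm{i}_B(v))\subseteq\varepsilon(B')G(\mathrm{i}_{B'}(v))$, and combining this with $F(a)\subseteq F'(a)$ and Property~\eqref{eq:simple2} for $y$ (legitimate since $a\in c(u)$) in the form $F'(a)\varepsilon(B')=F'(a)$, I obtain
$$F(a)G(\mathrm{i}_B(v))\subseteq F(a)\varepsilon(B')G(\mathrm{i}_{B'}(v))\subseteq F'(a)\varepsilon(B')G(\mathrm{i}_{B'}(v))=F'(a)G(\mathrm{i}_{B'}(v)).$$
For $a\notin c(u)$ the only delicate subcase is $c(v)\nsubseteq B$ together with $c(v)\subseteq B'$, where the two coordinates are $G(a)$ and $1$; but the constraint $B'\subseteq c(u)$ from Definition~\ref{def:Rk-triple-expansion} rules out $a\in c(v)$ (otherwise $a\in c(v)\subseteq B'\subseteq c(u)$), so $G(a)=1$ and they agree, while the remaining subcases give equal values outright. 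The essential use of both Properties~\eqref{eq:elusive2} and~\eqref{eq:simple2} explains why stability on the right is claimed only for $S^\omega(\Cl G)$.
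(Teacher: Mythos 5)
Your proof is correct and follows essentially the same route as the paper's: left stability reduces to the pointwise inclusion $F\subseteq F'$ because the relevant $\chi$ map is unchanged, while right stability hinges on applying Property~\eqref{eq:elusive2} to the right multiplier (with the two second components of the ordered pair as $X\subseteq Y$) and Property~\eqref{eq:simple2} to the larger element, exactly as in the paper. The case analysis on the second coordinate and on $a\in c(u)$ versus $a\notin c(u)$ likewise matches the paper's argument.
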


\begin{proof}
  Let $(F,B,u)$, $(G,C,v)$, and $(H,D,w)$ be elements
  of~$R^\omega(Q,A)$.

  Suppose that the inequality $(F,B,u)\le(G,C,v)$ holds so that, in
  particular, we have $u=v$. Let
  $$(I,X,wu)=(H,D,w)(F,B,u) %
  \text{ and } %
  (J,Y,wu)=(H,D,w)(G,C,u).$$
  In case $c(u)\subseteq D$, we get $X=D=Y$ and
  $$I(a)=H(a)F(\mathrm{i}_D(u))\subseteq H(a)G(\mathrm{i}_D(u))=J(a);$$
  note that the conditions in the previous line also hold if $a\in
  c(w)$. We now assume that $c(u)\nsubseteq D$, which yields
  $X=B\subseteq C=Y$. It remains to consider the case where $a\notin
  c(w)$, in which we obtain $I(a)=F(a)\subseteq G(a)=J(a)$. This
  completes the proof of left stability.

  For the proof of right stability within $S^\omega(\Cl G)$, we assume
  that the triples $(F,B,u)$, $(G,C,v)$, and $(H,D,w)$ are elements
  of~$S^\omega(\Cl G)$ such that the inequality $(F,B,u)\le(G,C,v)$
  holds, so that $u=v$. Consider the products
  $$(I,X,uw)=(F,B,u)(H,D,w)
  \text{ and } %
  (J,Y,uw)=(G,C,u)(H,D,w).$$
  Suppose first that $c(w)\subseteq B$, whence also $c(w)\subseteq C$
  holds. It follows that $X=B\subseteq C=Y$ and
  $$I(a) %
  =F(a)H(\mathrm{i}_B(w)) %
  =F(a) %
  \subseteq G(a) %
  =G(a)H(\mathrm{i}_C(w)) %
  =J(a).$$
  From hereon, we suppose that $c(w)\nsubseteq B$. In case
  $c(w)\subseteq C$, we get $X=D\subseteq c(w)\subseteq C=Y$. In case
  $c(w)\nsubseteq C$, we obtain $c(w)\nsubseteq B$ and $X=D=Y$.

  Next, we assume that $a\in c(u)$, so that
  $$I(a) %
  =F(a)H(\mathrm{i}_B(w)) %
  \subseteq G(a)\varepsilon(C)H(\mathrm{i}_C(w)) %
  =G(a)H(\mathrm{i}_C(w)) %
  =J(a),$$
  where the inclusion uses the inequality $F\subseteq G$ and the
  assumption that $(H,D,w)$ satisfies Property~\eqref{eq:elusive2}, and
  the second equality comes from the hypothesis that $(G,C,v)$ satisfies
  Property~\eqref{eq:simple2}.

  Finally, consider the case where $a\notin c(u)$. In case
  $c(w)\subseteq C$, since $c(w)\subseteq C\subseteq c(u)$, we get
  $I(a)=H(a)=1$, while we also have $J(a)=G(a)H(\mathrm{i}_C(w))=1$.
  Otherwise, that is in the case where $c(w)\nsubseteq C$, we simply get
  $I(a)=H(a)=J(a)$. This concludes the proof of right stability.
\end{proof}

Let $\Cl T^\omega_A$ denote the algebra of $\omega$-terms over $A$, that
is, the unary algebra freely generated by~$A$, in which the unary
operation is represented by the $\omega$-power.

Next, we choose special elements in~$R^\omega(Q,A)$.

\begin{Def}
  \label{def:RkG}
  Consider a finite $A$-labeled digraph $\Cl G=(Q,A,\delta)$. For each
  letter $a\in A$, let the triple
  $\nu_{[\omega]}(a)=(F_a,\emptyset,a)$ be determined by
  $$F_a(b)=
  \begin{cases}
    \delta(a)&\mbox{if } b=a,\\
    1&\mbox{otherwise}.
  \end{cases}
  $$
  Note that $(F_a,\emptyset,a)$ belongs to~$S^\omega(\Cl G)$. We define
  two homomorphisms $\Cl T^\omega_A\to S^\omega(\Cl G)$ of
  $\omega$-semigroups by letting
  $\nu_{\omega}(a)=\nu_{[\omega]}(a)=(F_a,\emptyset,a)$ for each $a\in
  A$: for $\nu_{\omega}$, we consider the natural structure of
  $\omega$-semigroup of~$S^\omega(\Cl G)$ while, for $\nu_{[\omega{}]}$,
  we take its alternative $\omega$-power defined in
  Section~\ref{sec:alt-w-power}.
\end{Def}

The unique homomorphism of $\omega$-semigroups $\Cl T^\omega_A\to\omo
AS$ mapping each generator $a\in A$ to itself is denoted $\eta$. In view
of Proposition~\ref{p:Rk,G-Rk-semigroup-really!}, we may consider the
unique homomorphism of $\omega$-semi\-groups $\rho_\Cl G:\omo AR\to
S^\omega(\Cl G)$ that maps each generator $a\in A$ to the
triple~$(F_a,\emptyset,a)$.

The following result further explains our choice of multiplication
in~$\Cl L_A$.

\begin{Lemma}
  \label{l:contents}
  For each $\alpha\in\Cl T^\omega_A$, the following properties hold:
  \begin{enumerate}[label=$(\roman*)$,ref=$(\roman*)$,leftmargin=*]
  \item\label{item:content}
    $c(\pi_3(\nu_{[\omega]}(\alpha)))=c(\eta(\alpha))$;
  \item\label{item:cumulative-content}
    $\pi_2(\nu_{[\omega]}(\alpha))=\vec c(\eta(\alpha))$.
  \end{enumerate}
\end{Lemma}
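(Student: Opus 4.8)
The plan is to argue by induction on the structure of the $\omega$-term $\alpha\in\Cl T^\omega_A$, distinguishing the three ways such a term is built --- a generator $a\in A$, a product $\beta\gamma$, and an $\omega$-power $\beta^\omega$ --- and using throughout that both $\nu_{[\omega]}$ and $\eta$ are homomorphisms of $\omega$-semigroups. Writing $\nu_{[\omega]}(\alpha)=(F,B,u)$, the two assertions become $c(u)=c(\eta(\alpha))$ and $B=\vec c(\eta(\alpha))$. Here I use freely that $c$ is a homomorphism into~\Om A{Sl} (so $c(xy)=c(x)\cup c(y)$ and $c(x^\omega)=c(x)$), that the content of $u\in\om A{LRB}$ is its set of letters, and that $\vec c(w)\subseteq c(w)$ always. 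Part~\ref{item:content} is the routine half: a generator gives $c(u)=\{a\}=c(\eta(a))$; for a product the third component is $uv$ and $c(uv)=c(u)\cup c(v)=c(\eta(\beta))\cup c(\eta(\gamma))=c(\eta(\alpha))$ by induction; and for an $\omega$-power Definition~\ref{def:Rk,G} leaves the third component equal to $u$, whence $c(u)=c(\eta(\beta))=c(\eta(\beta)^\omega)=c(\eta(\alpha))$.

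For part~\ref{item:cumulative-content} the base case is the observation that the generator $a$ has only the factorizations $a=1\cdot a=a\cdot 1$ in~\Om AS; as $\pj R(a)$ is not idempotent and $c(1)=\emptyset$, no suffix contributes and $\vec c(\eta(a))=\emptyset=B$. In the $\omega$-power case $\nu_{[\omega]}(\alpha)=\nu_{[\omega]}(\beta)^{[\omega]}$ has second component $c(u)$ by Definition~\ref{def:Rk,G}; since $\eta(\alpha)=\eta(\beta)^\omega$ is idempotent in~\Om AS, its image $\pj R(\eta(\alpha))$ is idempotent, and the factorization with empty left factor shows that $\vec c(w)=c(w)$ whenever $\pj R(w)$ is idempotent, so $\vec c(\eta(\alpha))=c(\eta(\alpha))=c(\eta(\beta))=c(u)=B$. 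In the product case the induction hypothesis gives $B=\vec c(\eta(\beta))$, $C=\vec c(\eta(\gamma))$ and, by~\ref{item:content} for $\gamma$, $c(v)=c(\eta(\gamma))$; hence the rule defining the second component $D$ in Lemma~\ref{l:cumul-cont,LRB} says exactly that $D=\vec c(\eta(\beta))$ when $c(\eta(\gamma))\subseteq\vec c(\eta(\beta))$ and $D=\vec c(\eta(\gamma))$ otherwise. Thus the product case reduces to the multiplicative law for the cumulative content,
\[
  \vec c(xy)=
  \begin{cases}
    \vec c(x) & \text{if } c(y)\subseteq\vec c(x),\\
    \vec c(y) & \text{otherwise,}
  \end{cases}
\]
taken at $x=\eta(\beta)$ and $y=\eta(\gamma)$. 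This formula is precisely the choice of multiplication in~$\Cl L_A$ that the lemma is meant to ``explain''.

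Proving this product formula is the step I expect to be the main obstacle. I would base it on three properties of the cumulative content in~\Om AR, all available from the theory of~$\pv R$ and ultimately from the $\omega$-term identities of Proposition~\ref{p:Rk,G-Rk-semigroup}: that $\vec c$ factors through $\pj R$ (so it may be computed on~\Om AR); that every $w$ admits a factorization $w=pq$ with $\pj R(q)$ idempotent and $c(q)=\vec c(w)$, realizing the cumulative content by a single $\pv R$-idempotent suffix; and the absorption law that an idempotent $e\in\Om AR$ satisfies $ez=e$ whenever $c(z)\subseteq c(e)$. The inclusion $\vec c(y)\subseteq\vec c(xy)$ is automatic, since $\pv R$-idempotent suffixes of $y$ are such suffixes of $xy$. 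When $c(y)\subseteq\vec c(x)$, writing $x=pq$ as above and applying absorption to $e=\pj R(q)$ gives $\pj R(xy)=\pj R(p)\pj R(q)\pj R(y)=\pj R(p)\pj R(q)=\pj R(x)$, whence $\vec c(xy)=\vec c(x)$.

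The genuinely delicate point is the reverse inclusion $\vec c(xy)\subseteq\vec c(y)$ when $c(y)\not\subseteq\vec c(x)$. One must analyse the $\pv R$-idempotent suffixes of $xy$ that cross the boundary between $x$ and $y$, namely those of the form $q''y$ with $q''$ a nonempty suffix of $x$. Such suffixes can genuinely occur --- for instance $a\cdot(ba)^\omega=(ab)^\omega$ --- so they cannot be excluded outright; the point is rather that the content such a suffix contributes cannot fall outside $\vec c(y)$ without forcing a $\pv R$-idempotent suffix of comparable content inside $x$ itself, that is, without violating the hypothesis $c(y)\not\subseteq\vec c(x)$. I would make this precise using the identity $(q''y)^\omega=q''(yq'')^\omega$ to relocate the loop of such an idempotent, together with the realizing factorization and the absorption law above, so as to track where its content is forced to appear. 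Carrying out this boundary analysis cleanly is where the real work of the lemma resides.
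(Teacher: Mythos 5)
Your argument is essentially the paper's: the same induction on the structure of $\alpha$, with part~(i) and the $\omega$-power case of part~(ii) handled identically (the latter via $\vec c(w)=c(w)$ when $\pj R(w)$ is idempotent), and the product case of part~(ii) reduced to the law $\vec c(xy)=\vec c(x)$ if $c(y)\subseteq\vec c(x)$ and $\vec c(xy)=\vec c(y)$ otherwise, which is exactly how the multiplication in~$\Cl L_A$ was designed. The only divergence is that the paper simply invokes this product law as a known property of the cumulative content (from the cited work of Almeida--Weil and Almeida--Zeitoun) rather than proving it, so the ``boundary analysis'' you flag as the main obstacle is not carried out there either; your sketch of it is reasonable but is extra work relative to the paper's proof.
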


\begin{proof}
  The proof is done by induction on the construction of the
  $\omega$-term $\alpha$. If $\alpha$~is a letter, then the result is
  obtained by direct inspection. Assuming that $\alpha=\beta\gamma$, the
  definitions and the induction hypothesis for both $\beta$ and $\gamma$
  yield
  \begin{align*}
    c(\pi_3(\nu_{[\omega]}(\alpha))) %
    &=c(\pi_3(\nu_{[\omega]}(\beta\gamma))) %
    =c(\pi_3(\nu_{[\omega]}(\beta)))\cup c(\pi_3(\nu_{[\omega]}(\gamma))) \\
    &=c(\eta(\beta))\cup c(\eta(\gamma)) %
    =c(\eta(\alpha)).
  \end{align*}
  Similarly, since $\vec c(\eta(\alpha))$ is equal to $\vec
  c(\eta(\beta))=\pi_2(\nu_{[\omega]}(\beta))$ if
  $c(\eta(\gamma))\subseteq\vec c(\eta(\beta))$, and to~$\vec
  c(\eta(\gamma))=\pi_2(\nu_{[\omega]}(\gamma))$ otherwise, we
  get $\vec c(\eta(\alpha))=\pi_2(\nu_{[\omega]}(\alpha))$ by
  definition of the multiplication in~$\Cl L_A$.

  Suppose next that the induction hypothesis holds for the $\omega$-term
  $\alpha$. By definition of the $[\omega]$-power and since
  $\nu_{[\omega]}(\alpha^\omega)=\nu_{[\omega]}(\alpha)^{[\omega]}$, we
  must have $\pi_2(\nu_{[\omega]}(\alpha^\omega)) %
  =c(\pi_3(\nu_{[\omega]}(\alpha^\omega))) %
  =c(\pi_3(\nu_{[\omega]}(\alpha)))$. As $\alpha$
  satisfies~\ref{item:content} and $\vec
  c(\eta(\alpha^\omega))=c(\eta(\alpha))$, we deduce that
  $\alpha^\omega$ still satisfies both \ref{item:content}
  and~\ref{item:cumulative-content}.
\end{proof}

In particular, we obtain the following result.

\begin{Prop}
  \label{p:idempotency}
  An $\omega$-term $\alpha\in\Cl T^\omega_A$ is such that $\pj
  R(\eta(\alpha))$ is idempotent if and only if the equality
  $\pi_2(\nu_{[\omega]}(\alpha)) %
  =c(\pi_3(\nu_{[\omega]}(\alpha)))$ holds.\qed
\end{Prop}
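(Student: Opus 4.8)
The plan is to characterize the condition ``$\pj R(\eta(\alpha))$ is idempotent'' in terms that connect to the already-established content computations of Lemma~\ref{l:contents}. The crucial known fact about \pv R is the characterization of idempotency in terms of the cumulative content: for a pseudoword $w$, the image $\pj R(w)$ is idempotent precisely when $w$ is \Cl R-equivalent to $w^2$ modulo \pv R, and this is governed by whether the cumulative content $\vec c(w)$ already equals the content $c(w)$. Concretely, I would recall from \cite{Almeida&Zeitoun:2003b} (or re-derive) that $\pj R(w)$ is idempotent if and only if $\vec c(w)=c(w)$; the intuition is that $w^\omega$ has the property that every letter appearing in it also appears in a suffix with idempotent \pv R-image, so the idempotency of $\pj R(w)$ forces the first occurrence of each letter to be ``absorbed'' into the cumulative content.

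\smallskip

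With that characterization in hand, the proof becomes a direct translation through Lemma~\ref{l:contents}. By part~\ref{item:cumulative-content} of that lemma we have $\pi_2(\nu_{[\omega]}(\alpha))=\vec c(\eta(\alpha))$, and by part~\ref{item:content} we have $c(\pi_3(\nu_{[\omega]}(\alpha)))=c(\eta(\alpha))$. Therefore the equality $\pi_2(\nu_{[\omega]}(\alpha))=c(\pi_3(\nu_{[\omega]}(\alpha)))$ holds if and only if $\vec c(\eta(\alpha))=c(\eta(\alpha))$. Combining this with the \pv R-characterization of idempotency yields exactly the claimed equivalence: $\pj R(\eta(\alpha))$ is idempotent iff $\vec c(\eta(\alpha))=c(\eta(\alpha))$ iff $\pi_2(\nu_{[\omega]}(\alpha))=c(\pi_3(\nu_{[\omega]}(\alpha)))$. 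Since the statement is flagged as ``in particular'' following Lemma~\ref{l:contents} and ends with \verb|\qed|, I expect the intended proof to be essentially this one-line chain of equivalences, with the substantive input being the cited characterization of idempotency over \pv R.

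\smallskip

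The main obstacle is isolating and justifying the fact that, over \pv R, idempotency of $\pj R(w)$ is equivalent to $\vec c(w)=c(w)$. This is where the real content lives, and I would need to be careful about whether this holds for arbitrary pseudowords $w=\eta(\alpha)$ or only requires the weaker statement for $\omega$-terms. The forward direction ($\pj R(w)$ idempotent $\Rightarrow$ $\vec c(w)=c(w)$) uses that if $w$ is idempotent modulo \pv R then $w=w\cdot w$ with $\pj R(w)$ idempotent, so every letter of $c(w)$ lies in the content of a suffix $v$ with $\pj R(v)$ idempotent, giving $c(w)\subseteq\vec c(w)$; the reverse inclusion $\vec c(w)\subseteq c(w)$ is automatic from the definition of cumulative content. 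For the converse I would rely on the structure theory of \Om AR, namely that an element with $\vec c(w)=c(w)$ is \Cl R-equivalent to its square, which is precisely the tameness/word-problem machinery of \cite{Almeida&Zeitoun:2003b} that has already been invoked. Once this equivalence is cited cleanly, the remainder is the formal substitution described above and requires no further calculation.
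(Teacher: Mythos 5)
Your proposal is correct and takes essentially the same route as the paper: the proposition is stated there with no written proof, being an immediate consequence of Lemma~\ref{l:contents} combined with the standard fact about $\Om AR$ that $\pj R(w)$ is idempotent if and only if $\vec c(w)=c(w)$. Your identification of that characterization of idempotency via the cumulative content as the one substantive external input, with the rest a mechanical substitution using both parts of Lemma~\ref{l:contents}, is exactly the intended argument.
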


Further properties of the order relation are established in the
following lemma.

\begin{Lemma}
  \label{l:order}
  For a finite $A$-labeled digraph \Cl G, the following conditions hold
  for all elements $x$ and $y$ of the $\omega$-semigroup $S^\omega(\Cl
  G)$, every $\omega$-term $\alpha\in\Cl T^\omega_A$, and every letter
  $a\in A$:
  \begin{enumerate}[label=$(\roman*)$,ref=$(\roman*)$,leftmargin=*]
  \item\label{item:omega-rise} $x^\omega\le x^{[\omega]}$;
  \item\label{item:omega-stability} %
    $x\le y$ implies $x^{[\omega]}\le y^{[\omega]}$;
  \item\label{item:omega-rise-general} %
    $\nu_\omega(\alpha)\le\nu_{[\omega]}(\alpha)$.
  \end{enumerate}
\end{Lemma}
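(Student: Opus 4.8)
The plan is to prove the three assertions in the stated order, since each is progressively more elaborate and the later ones reuse the earlier ones. Throughout, I would fix the generic element $x=(F,B,u)$ of $S^\omega(\Cl G)$ and recall two crucial facts from Section~\ref{sec:alt-w-power}: first, by Lemma~\ref{l:natural-omeqa-power}, the natural power $x^\omega=(F_\omega,B,u)$ has $F_\omega(a)=F(a)F(\mathrm{i}_B(u))^{\omega-1}$ on $c(u)$; second, by Definition~\ref{def:Rk,G}, the alternative power $x^{[\omega]}=(G,c(u),u)$ has $G(a)=F_\omega(a)\varepsilon(u)$ on $c(u)$ and $G(a)=1$ elsewhere.

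For part~\ref{item:omega-rise}, I would compare the two triples componentwise according to the definition of $\le$. The third components agree (both equal $u$). For the second components, I need $\pi_2(x^\omega)=B\subseteq c(u)=\pi_2(x^{[\omega]})$, which is immediate since $B\subseteq c(u)$ by membership in $R^\omega(Q,A)$. The only real content is the first-component inclusion $F_\omega(a)\subseteq F_\omega(a)\varepsilon(u)$ for each $a\in c(u)$; this follows because $\varepsilon(u)=\varepsilon(c(u))$ contains the identity relation (the empty word lies in $c(u)^*$), so right-multiplication by $\varepsilon(u)$ can only enlarge a relation. For part~\ref{item:omega-stability}, I would start from $x\le y$, which forces equal third components, hence equal contents and equal $\varepsilon(\cdot)$-values; writing $x=(F,B,u)$, $y=(G,C,u)$ with $F(a)\subseteq G(a)$ and $B\subseteq C$, I would first check that $x^\omega\le y^\omega$ by inspecting the formula $F(a)F(\mathrm{i}_B(u))^{\omega-1}$ and using monotonicity of products of relations under inclusion, then pass through the $\varepsilon(u)$-factor to conclude $x^{[\omega]}\le y^{[\omega]}$ on first components, while the second components give $c(u)=c(u)$.

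For part~\ref{item:omega-rise-general}, the statement $\nu_\omega(\alpha)\le\nu_{[\omega]}(\alpha)$ for every $\omega$-term $\alpha$ is the substantial one, and I would prove it by structural induction on $\alpha$, exactly as in Lemma~\ref{l:contents}. The base case $\alpha=a$ is trivial since the two homomorphisms agree on generators. For a product $\alpha=\beta\gamma$, I would use that $\le$ is stable under multiplication on both sides within $S^\omega(\Cl G)$ by Proposition~\ref{p:order-stability}, combining the inductive inequalities $\nu_\omega(\beta)\le\nu_{[\omega]}(\beta)$ and $\nu_\omega(\gamma)\le\nu_{[\omega]}(\gamma)$. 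For a power $\alpha=\beta^\omega$, the key chain is $\nu_\omega(\beta^\omega)=\nu_\omega(\beta)^\omega\le\nu_{[\omega]}(\beta)^\omega\le\nu_{[\omega]}(\beta)^{[\omega]}=\nu_{[\omega]}(\beta^\omega)$, where the first inequality applies part~\ref{item:omega-stability} to the inductive hypothesis $\nu_\omega(\beta)\le\nu_{[\omega]}(\beta)$, and the second is an instance of part~\ref{item:omega-rise}. I expect the main obstacle to be the power step of part~\ref{item:omega-stability}: one must verify that the natural $\omega$-power is monotone for $\le$, which requires care because the exponent $\omega-1$ in $F(\mathrm{i}_B(u))^{\omega-1}$ depends on the element, yet since $x\le y$ forces $u$ and hence $\mathrm{i}_B(u)$ to match, the relevant relations $F(\mathrm{i}_B(u))\subseteq G(\mathrm{i}_B(u))$ do compare correctly and the monotonicity of taking fixed powers of relations under inclusion then applies.
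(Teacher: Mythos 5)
Parts~(i) and~(iii) of your plan are sound and follow the same route as the paper: (i)~is the componentwise check using $\mathrm{Id}_Q\subseteq\varepsilon(u)$ (the empty word lies in $c(u)^*$), and (iii)~is the structural induction combining Proposition~\ref{p:order-stability} with (i) and~(ii). The genuine gap is in part~(ii), exactly at the point you call ``the main obstacle'' and then dismiss. Writing $x=(F,B,u)$ and $y=(G,C,u)$, the hypothesis $x\le y$ only gives $B\subseteq C$, not $B=C$, so the letters $\mathrm{i}_B(u)$ and $\mathrm{i}_C(u)$ appearing in the two instances of the formula of Lemma~\ref{l:natural-omeqa-power} need \emph{not} coincide: you must compare $F(a)F(\mathrm{i}_B(u))^{\omega-1}$ with $G(a)G(\mathrm{i}_C(u))^{\omega-1}$, and when $B\subsetneq C$ these are powers of relations attached to different letters of~$u$, so ``monotonicity of products of relations under inclusion'' does not apply. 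Your assertion that ``$x\le y$ forces $u$ and hence $\mathrm{i}_B(u)$ to match'' is false in general, and the formula-based comparison does not go through as described.

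The statement is nonetheless correct, and the intended argument bypasses the explicit formula: choose $n$ such that $s^\omega=s^n$ for every element $s$ of the finite semigroup $S^\omega(\Cl G)$; since $\le$ restricted to $S^\omega(\Cl G)$ is stable under multiplication on both sides (Proposition~\ref{p:order-stability}) and $S^\omega(\Cl G)$ is closed under products, iterating $x\cdot x\le y\cdot x\le y\cdot y$ yields $x^n\le y^n$, hence $x^\omega\le y^\omega$, that is, $F_\omega(a)\subseteq G_\omega(a)$ for every~$a$. Right-composing the first components with the fixed relation $\varepsilon(u)$ (which is monotone for inclusion) and noting that both $[\omega]$-powers have second component $c(u)$ and third component $u$ then gives $x^{[\omega]}\le y^{[\omega]}$. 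With this replacement for part~(ii), the remainder of your proposal, including the chain $\nu_\omega(\beta)^\omega\le\nu_{[\omega]}(\beta)^\omega\le\nu_{[\omega]}(\beta)^{[\omega]}$ in the $\omega$-power case of part~(iii) (whose first inequality is precisely the natural-power monotonicity just established), goes through.
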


\begin{proof}
  Properties \ref{item:omega-rise} and \ref{item:omega-stability} follow
  immediately from the interpretation of the $\omega$-power given by
  Definition~\ref{def:Rk,G}. Property~\ref{item:omega-rise-general} can
  then easily be deduced by induction on the construction of the
  $\omega$-term $\alpha$ in terms of the application of the operations
  of multiplication and $\omega$-power taking into account that the
  order $\le$ in $S^\omega(\Cl G)$ is stable under multiplication
  by Proposition~\ref{p:order-stability}.
\end{proof}

Following standard terminology, we say that $(S,{\le})$ is an
  \emph{ordered $\omega$-semi\-group} if $S$ is an $\omega$-semigroup and 
  $\le$ is a partial order on~$S$ which is compatible with multiplication
  and $\omega$-power.

\begin{Prop}
  \label{p:ordered-w-semigroup}
  The pair $(S^\omega(\Cl G),{\le})$ is an ordered $\omega$-semigroup.
\end{Prop}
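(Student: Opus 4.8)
The plan is to assemble the statement from the results already proved, since it follows from them essentially by bookkeeping. By the definition of an ordered $\omega$-semigroup just recalled, three things must be checked: that $S^\omega(\Cl G)$ carries a structure of $\omega$-semigroup, that $\le$ restricts to a partial order on it, and that this order is compatible with both the multiplication and the $\omega$-power.

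The first two points are immediate. Proposition~\ref{p:elusive} shows that $S^\omega(\Cl G)$ is an $\omega$-subsemigroup of $R^\omega(\Cl G)$, so it is an $\omega$-semigroup for the multiplication and the $[\omega]$-power. Since $\le$ was defined as a partial order on the whole of $R^\omega(Q,A)$ at the beginning of this section, its restriction to the subset $S^\omega(\Cl G)$ is again a partial order, with no extra argument needed.

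For compatibility with multiplication I would invoke Proposition~\ref{p:order-stability}. Its left-stability half is established on all of $R^\omega(Q,A)$, so it holds in particular for elements of $S^\omega(\Cl G)$; its right-stability half is proved precisely for the restriction of $\le$ to $S^\omega(\Cl G)$. Hence, given $x\le y$ in $S^\omega(\Cl G)$ and an arbitrary $z\in S^\omega(\Cl G)$, we obtain both $zx\le zy$ and $xz\le yz$; combining the two inequalities with transitivity of $\le$ then yields $xx'\le yy'$ whenever $x\le y$ and $x'\le y'$, which is the required two-sided compatibility. Compatibility with the operation $x\mapsto x^{[\omega]}$ is then exactly Lemma~\ref{l:order}\ref{item:omega-stability}, which states that $x\le y$ implies $x^{[\omega]}\le y^{[\omega]}$.

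I do not expect any genuine obstacle here, as the real work has already been carried out in the earlier propositions. The one point deserving attention is that the right-stability part of Proposition~\ref{p:order-stability} is available only inside $S^\omega(\Cl G)$, since its proof relies on Properties~\eqref{eq:simple2} and~\eqref{eq:elusive2} that cut out this subsemigroup; it is therefore essential to use the closure of $S^\omega(\Cl G)$ under multiplication (Proposition~\ref{p:elusive}) so that the products $xz$ and $yz$ remain in the set where the inequality is guaranteed to hold. Assembling these observations completes the argument.
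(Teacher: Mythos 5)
Your proposal is correct and follows essentially the same route as the paper, which likewise cites Proposition~\ref{p:order-stability} for compatibility with multiplication and Lemma~\ref{l:order}\ref{item:omega-stability} for compatibility with the $\omega$-power. Your extra remarks on why right stability is only available inside $S^\omega(\Cl G)$ and on the need for Proposition~\ref{p:elusive} are accurate but left implicit in the paper's one-line proof.
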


\begin{proof}
  The order is compatible with multiplication by
  Proposition~\ref{p:order-stability} and with $\omega$-power by
  Lemma~\ref{l:order}\ref{item:omega-stability}.
\end{proof}

\section{Recognition of \texorpdfstring{\pv R}{R}-closures}
\label{sec:recognition}

Given two $\omega$-semigroups $S$ and $T$, a \emph{relational morphism}
$S\to T$ is a binary relation $\mu\subseteq S\times T$ with domain $S$
which is an $\omega$-subsemigroup of~$S\times T$. For a finite
$A$-labeled digraph $\Cl G=(Q,A,\delta)$, let $S(\Cl G)$ be the
semigroup $\delta(A^+)$. The relational morphism of
$\omega$-semigroups %
$\mu_\Cl G:S(\Cl G)\to S^\omega(\Cl G)$ is the composite relation
$\rho_\Cl G\circ\pj R\circ(\delta|_{\omo AS})^{-1}$.

Note that the composite mapping $\tilde\rho_\Cl G=\rho_\Cl G\circ\pj R$
is a homomorphism of $\omega$-semigroups. On the other hand, the
restriction of $\tilde\rho_\Cl G$ to~$A$ extends to a continuous
homomorphism $\Om AS\to S^\omega(\Cl G)$. Its restriction to \omo AS is
denoted~$\zeta$. For an $\omega$-term $\alpha\in\Cl T^\omega_A$ on the
alphabet $A$ representing the $\omega$-word $w\in\omo AS$, note that
$\nu_\omega(\alpha)=\zeta(w)$ and %
$\nu_{[\omega]}(\alpha)=\tilde\rho_\Cl G(w)$. %

Denote by $S_A^\omega(\Cl G)$ the subsemigroup of~$S^\omega(\Cl G)$
generated by~$\tilde\rho_\Cl G (A)$. Note that $S_A^\omega(\Cl G)$
consists of elements of 
$S^\omega(\Cl G)$ of the form $(F,\emptyset,u)$. There is another
mapping that plays a role in our construction. It is the mapping
$\xi:S_A^\omega(\Cl G)\to\Cl B(Q)$ which sends the triple $(F,B,u)$ to
the binary relation $F(\mathrm{i}_\emptyset(u))$. It follows from the definition
of the multiplication in~$R^\omega(Q,A)$ that the restriction
$\xi'=\xi|_{S_A^\omega(\Cl G)}$~is a homomorphism of semigroups, taking
its values in $S(\Cl G)$.

The relevant mappings are depicted in the following diagram:
$$\xymatrix{
  & %
  \Cl T^\omega_A %
  \ar@/_2mm/[ld]_{\nu_\omega} %
  \ar[d]^\eta %
  \ar@/^18mm/[rdd]^{\nu_{[\omega]}} %
  & %
  \\
  S_A^\omega(\Cl G) %
  \ar[rd]_{\xi'} %
  \ar@{^{((}->}[d] %
  & %
  \omo AS %
  \ar[l]_\zeta %
  \ar[d]^(0.65){\delta|_{\omo AS}} %
  \ar[r]^{\pj R} %
  \ar[rd]^{\tilde\rho_\Cl G} & %
  \omo AR %
  \ar[d]_{\rho_\Cl G}\\
  S^\omega(\Cl G) %
  \ar[rd]_{\xi} %
  & %
  S(\Cl G) %
  \ar@{-->}[r]_{\mu_\Cl G} %
  \ar@{^{((}->}[d] %
  & %
  S^\omega(\Cl G). \\
  & %
  \Cl B(Q) %
  & %
}$$
Note that the diagram commutes. In view of
Lemma~\ref{l:order}\ref{item:omega-rise-general}, the inequality
$\zeta(w)\le\tilde\rho_\Cl G(w)$ holds for every $w\in\omo AS$.

Assuming that the language $L\subseteq A^+$ is recognized by some
automaton obtained from the $A$-labeled digraph \Cl G by adding an
appropriate choice of sets $I$ and $T$, respectively of initial and
terminal vertices, the language $L$~is also recognized by the transition
homomorphism $\delta|_{A^+}:A^+\to \Cl B(Q)$, namely
$$L=(\delta|_{A^+})^{-1}
\{\theta\in\Cl B(Q):\theta\cap(I\times T)\ne\emptyset\}.$$
It follows that the homomorphism $\delta|_{\omo AS}:\omo AS\to\Cl B(Q)$ recognizes
$\tclc L$ as
$$\tclc L=(\delta|_{\omo AS})^{-1}
\{\theta\in\Cl B(Q):\theta\cap(I\times T)\ne\emptyset\}.$$
whence so does $\zeta$.

\begin{Thm}
  \label{t:R-closures}
  Let $\Cl A=(Q,A,\delta,I,T)$ be a finite automaton and consider the
  underlying $A$-labeled digraph $\Cl G=(Q,A,\delta)$ and the set
  $$P=\{x\in S^\omega(\Cl G): %
  \xi(x)\cap (I\times T)\ne\emptyset\}.$$
  Then the equality $\rho_\Cl G^{-1}(P)=\tclcv RL$ holds for the
  language $L$ recognized by~\Cl A.
\end{Thm}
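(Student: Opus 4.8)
The plan is to reduce the statement to a single ``one-sided'' inclusion about the first component of the triples, and then to dispatch the two inclusions of the claimed equality separately: the easy one through the order, the hard one through a realization property.

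\smallskip

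First I set up the reduction. Recall from the commuting diagram that $\xi'\circ\zeta=\delta|_{\omo AS}$, so that $\zeta^{-1}(P)=\tclc L$; recall also that $\tilde\rho_{\Cl G}=\rho_{\Cl G}\circ\pj R$ and that $\pj R$ restricts to a surjection $\omo AS\to\omo AR$. Since the condition $\rho_{\Cl G}(y)\in P$ depends only on $y$ and every $y\in\omo AR$ has the form $\pj R(w)$, one obtains $\rho_{\Cl G}^{-1}(P)=\pj R\bigl(\tilde\rho_{\Cl G}^{-1}(P)\bigr)$. I would then invoke the $\omega$-fullness of \pv R (a consequence of its tameness, \cite{Almeida&Costa&Zeitoun:2005b}), which gives $\tclcv RL=\pj R(\tclc L)$. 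Thus the theorem becomes the assertion that $\tilde\rho_{\Cl G}^{-1}(P)$ and $\tclc L$ have the same saturation under~$\pj R$.

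\smallskip

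The key structural observations are that $\xi$ is monotone for the order $\le$ (equal third components force the same first letter, and the first components only grow), so that $P$ is an up-set, and that $\zeta(w)\le\tilde\rho_{\Cl G}(w)$ for every $w\in\omo AS$ by Lemma~\ref{l:order}\ref{item:omega-rise-general}. The easy inclusion then follows at once: if $w\in\tclc L=\zeta^{-1}(P)$, then $\zeta(w)\in P$, hence $\tilde\rho_{\Cl G}(w)\in P$ since $P$ is an up-set, so $\tclc L\subseteq\tilde\rho_{\Cl G}^{-1}(P)$; applying $\pj R$ yields $\tclcv RL\subseteq\rho_{\Cl G}^{-1}(P)$. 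The same monotonicity gives, more precisely, $\delta(w)=\xi(\zeta(w))\subseteq\xi(\rho_{\Cl G}(\pj R(w)))$ for every $\omega$-word $w$, that is, the inclusion $\bigcup\{\delta(w):\pj R(w)=y\}\subseteq\xi(\rho_{\Cl G}(y))$.

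\smallskip

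The whole difficulty is concentrated in the reverse inclusion, the \emph{realization property}
$$\xi(\rho_{\Cl G}(y))\subseteq\bigcup\{\delta(w):w\in\omo AS,\ \pj R(w)=y\}\qquad(y\in\omo AR).$$
Granting it, the remaining inclusion is immediate: if $\rho_{\Cl G}(y)\in P$, pick $(p,q)\in\xi(\rho_{\Cl G}(y))\cap(I\times T)$; by the realization property there is an $\omega$-word $w$ with $\pj R(w)=y$ and $(p,q)\in\delta(w)$, so $w\in\tclc L$ and $y=\pj R(w)\in\pj R(\tclc L)=\tclcv RL$. I expect this realization property to be the main obstacle. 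It asserts that every pair recorded in the first component of $\rho_{\Cl G}(y)$, read at the first letter, is genuinely produced by reading in the automaton some $\omega$-word that is \pv R-equivalent to $y$; intuitively this is exactly what the first component was designed to encode, with the relations $\varepsilon(u)$ accounting for the readings of words of content contained in $c(u)$ that approximate idempotent (that is, $\omega$-power) factors. To prove it I would argue by induction on the structure of an $\omega$-term $\alpha$ representing $y$, using the pro-\pv R normal form, namely left basic factorizations and the solution of the word problem in \omo AR from \cite{Almeida&Zeitoun:2003b}. The genuine subtlety is that $\xi$ is a homomorphism only on $S_A^\omega(\Cl G)$, whereas the values $\rho_{\Cl G}(y)$ involve $[\omega]$-powers and hence nonempty second components; the induction must therefore carry the stronger invariant that, for every subset $B$, the relation $F(\mathrm{i}_B(u))$ with $(F,B,u)=\tilde\rho_{\Cl G}(w)$ equals the union of the $\delta(w')$ over the $\omega$-words $w'$ that are \pv R-equivalent to the suffix of $w$ beginning at its first letter outside $B$. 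Checking that this invariant is preserved by product and by the $[\omega]$-power, through Lemma~\ref{l:natural-omega-power-product} and the computations behind Definition~\ref{def:Rk,G}, is where the real work lies.
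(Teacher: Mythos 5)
Your reduction and your easy inclusion are sound and essentially coincide with the paper's argument: $\omega$-fullness of \pv R lifts the problem to \omo AS, the set $P$ is an up-set for $\le$ because $\xi$ only reads the first component at the fixed first letter of the third component, and $\zeta(w)\le\tilde\rho_\Cl G(w)$ (Lemma~\ref{l:order}\ref{item:omega-rise-general}) then gives $\tclcv RL\subseteq\rho_\Cl G^{-1}(P)$. You have also correctly isolated where the entire difficulty lives, namely the realization property $\xi(\rho_\Cl G(y))\subseteq\bigcup\{\delta(w):\pj R(w)=y\}$. But you do not prove it: you announce an induction ``carrying a stronger invariant'' and state that checking its preservation under product and $[\omega]$-power ``is where the real work lies.'' That is precisely the content of the theorem, so as it stands the proposal has a genuine gap. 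Moreover, the invariant you propose is asserted as an \emph{equality} of $F(\mathrm{i}_X(u))$ with a union of relations $\delta(w')$, for all $X$, without any indication of why the $[\omega]$-power preserves it; and the machinery you invoke (left basic factorizations, the word problem in \omo AR) is not what makes the step go through.

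The missing idea is the paper's handling of the case $\alpha=\alpha_0^\omega$ in the inductive claim: if $(p,q)\in\xi(\nu_{[\omega]}(\alpha_0)^{[\omega]})$, then by Definition~\ref{def:Rk,G} this relation factors as $\xi(\nu_{[\omega]}(\alpha_0^n))\,\varepsilon(c(\eta(\alpha_0)))$ for $n$ with $x^\omega=x^n$, so one finds an intermediate state $r$ with $(p,r)\in\xi(\nu_{[\omega]}(\alpha_0^n))$ and a genuine path $r\to q$ labeled by a word $u\in c(\eta(\alpha_0))^*$. Since $\nu_{[\omega]}(\alpha_0^n)$ is idempotent, the pigeonhole principle yields a state $r'$ with $(p,r')$, $(r',r')$, $(r',r)$ all in $\xi(\nu_{[\omega]}(\alpha_0^n))$; the induction hypothesis (already extended to products) produces $\omega$-terms $\beta_1,\beta_2,\beta_3$ realizing these three pairs under $\nu_\omega$ with $\pj R(\eta(\beta_i))=\pj R(\eta(\alpha_0^n))$, and one checks that $\beta=\beta_1\beta_2^\omega\beta_3u$ satisfies $(p,q)\in\xi(\nu_\omega(\beta))$ and $\pj R(\eta(\beta))=\pj R(\eta(\alpha_0^\omega))$, the latter using only that pseudowords with the same \pv R-image have the same content, so that the factor $\beta_3u$ is absorbed after $\beta_2^\omega$ over \pv R. Without this pigeonhole-and-reassembly construction (or an equivalent one), your realization property, and hence the inclusion $\rho_\Cl G^{-1}(P)\subseteq\tclcv RL$, remains unestablished.
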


\begin{proof}
  Let $w$ be an arbitrary element of~$\tclcv RL$. Since \pv R is
  $\omega$-full \cite[Theorem~7.4]{Almeida&Costa&Zeitoun:2014}, there is
  some $v\in\tclc L$ such that $\pj R(v)=w$. Let $\alpha\in\Cl
  T^\omega_A$ be an $\omega$-term such that $\eta(\alpha)=v$ and let
  $v_n$ be the word that is obtained from $\alpha$ by replacing each
  subterm of the form $u^\omega$ by $u^{n!}$. Then $\lim v_n=v$. Since
  the closure of $L$ in~\Om AS is an open
  set~\cite[Theorem~3.6.1]{Almeida:1994a}, it follows that $v_n\in L$
  for all sufficiently large $n$. For each $n$, $\rho_\Cl G(v_n)$ is an
  element of~$S_A^\omega(\Cl G)$. Since $S^\omega(\Cl G)$~is finite, there
  is some subsequence $(v_{n_k})_k$ such that $s=\rho_\Cl G(v_{n_k})\in
  P$ is independent of~$k$. Note that the sequence $(\rho_\Cl G(v_n))_n$
  is eventually constant with value $\nu_\omega(\alpha)$. In particular,
  that value must be~$s$. On the other hand, %
  $\rho_\Cl G(w)=\tilde\rho_\Cl G(v)=\nu_{[\omega]}(\alpha)$. In view of
  Lemma~\ref{l:order}\ref{item:omega-rise-general}, it follows that
  $\rho_\Cl G(w)\in P$ since $P$~is upward closed with respect to the
  order~$\le$.

  For the reverse inclusion, let $w$ be an arbitrary element of
  $\tilde\rho_\Cl G^{-1}(P)$. We claim that there is $v\in\tclc L$ such
  that $\pj R(v)=\pj R(w)$, which shows that %
  $\rho_\Cl G^{-1}(P)\subseteq\tclcv RL$. Since $\zeta$ recognizes
  $\tclc L$, it suffices to show that there is $v\in\omo AS$ such that
  $\zeta(v)\in P$ and $\pj R(v)=\pj R(w)$. More generally, suppose that
  $\alpha\in\Cl T^\omega_A$ is such that the pair of states $(p,q)$
  belongs to~$\xi(\nu_{[\omega]}(\alpha))$. We claim that there is some
  $\beta\in\Cl T^\omega_A$ such that %
  $\pj R(\eta(\alpha))=\pj R(\eta(\beta))$ and %
  $(p,q)\in \xi(\nu_\omega(\beta))$. We prove the claim by induction on
  the construction of the $\omega$-term $\alpha$, in terms of the
  operations of multiplication and $\omega$-power.

  If $\alpha=\alpha_1\alpha_2$, then there exists $r\in Q$ such that
  $(p,r)\in\xi(\nu_{[\omega]}(\alpha_1))$ and
  $(r,q)\in\xi(\nu_{[\omega]}(\alpha_2))$. %
  Assuming the claim holds for both $\alpha_i$, there is %
  $\beta_i\in\Cl T^\omega_A$ such that %
  $\pj R(\eta(\alpha_i))=\pj R(\eta(\beta_i))$ ($i=1,2$), %
  $(p,r)\in \xi(\nu_\omega(\beta_1))$, and %
  $(r,q)\in \xi(\nu_\omega(\beta_2))$. %
  Then the $\omega$-term $\beta=\beta_1\beta_2$ has the required
  properties.

  Suppose next that $\alpha=\alpha_0^\omega$, where the claim holds for
  the $\omega$-term $\alpha_0$. By hypothesis, the pair of states
  $(p,q)$ belongs to the relation
  $\xi(\nu_{[\omega]}(\alpha))=\xi(\nu_{[\omega]}(\alpha_0)^{[\omega]})$.
  Let $n\ge1$ be such that $S^\omega(\Cl G)$ satisfies the identity
  $x^\omega=x^n$ for the natural $\omega$-power. In view of the
  definition of the $[\omega]$-power in %
  $S^\omega(\Cl G)$, it follows that there is some state $r\in Q$ such
  that $(p,r)\in\xi(\nu_{[\omega]}(\alpha_0^n))$, where
  $\nu_{[\omega]}(\alpha_0)^\omega %
  =\nu_{[\omega]}(\alpha_0)^n %
  =\nu_{[\omega]}(\alpha_0^n)$, %
  and there is a path in~\Cl G from $r$ to~$q$ labeled by some word $u$
  of~$c(\eta(\alpha_0))^*$. Since $\nu_{[\omega]}(\alpha_0^n)$ is
  idempotent, by the pigeonhole principle there is some state $r'\in Q$
  such that each of the pairs $(p,r')$, $(r',r')$, and $(r',r)$ belongs
  to~$\xi(\nu_{[\omega]}(\alpha_0^n))$. By the case of the product,
  already handled in the preceding paragraph, since $\alpha_0$ is
  assumed to satisfy the claim, so does $\alpha_0^n$. Hence, there are
  $\omega$-terms %
  $\beta_i\in\Cl T^\omega_A$ such that %
  $\pj R(\eta(\beta_i))=\pj R(\eta(\alpha_0^n))$ ($i=1,2,3$),
  $(p,r')\in\xi(\nu_\omega(\beta_1))$,
  $(r',r')\in\xi(\nu_\omega(\beta_2))$, and
  $(r',r)\in\xi(\nu_\omega(\beta_3))$. %
  Since elements of~\Om AS with the same image under \pj R have the same
  content, we obtain the equalities %
  $$\pj R(\eta(\beta_1\beta_2^\omega\beta_3 u)) %
  =\pj R(\eta(\beta_1\beta_2^\omega\beta_3)) %
  =\pj R(\eta(\alpha_0^\omega)) %
  =\pj R(\eta(\alpha)).$$
  We have thus shown that the $\omega$-term
  $\beta=\beta_1\beta_2^\omega\beta_3 u$ has all the required
  properties, thereby concluding the induction step and the proof of the
  theorem.
\end{proof}

Note that we may use the same labeled digraph to recognize several
languages.

\begin{Cor}
  \label{c:recognizing-several-closures}
  Let $L_1,\ldots,L_n$ be regular languages over the same finite
  alphabet $A$ and suppose that, for each $i\in\{1,\ldots,n\}$, a
  suitable choice of initial and terminal states in the $A$-labeled
  digraph $\Cl G=(Q,A,\delta)$ yields an automaton recognizing $L_i$.
  Then $S^\omega(\Cl G)$ recognizes every Boolean combination of the
  sets $\tclcv R{L_i}$.
\end{Cor}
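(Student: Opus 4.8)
The plan is to apply Theorem~\ref{t:R-closures} once for each language $L_i$, exploiting the fact that a single digraph $\Cl G$ serves all of them, and then to invoke the standard closure of recognition by a homomorphism under Boolean operations. First, for each $i\in\{1,\ldots,n\}$, let $I_i$ and $T_i$ be the sets of initial and terminal states that turn $\Cl G$ into an automaton recognizing $L_i$, and set
$$P_i=\{x\in S^\omega(\Cl G):\xi(x)\cap(I_i\times T_i)\ne\emptyset\}.$$
By Theorem~\ref{t:R-closures}, we have $\tclcv R{L_i}=\rho_\Cl G^{-1}(P_i)$. The decisive observation is that the homomorphism $\rho_\Cl G:\omo AR\to S^\omega(\Cl G)$ is independent of~$i$: it depends only on the labeled digraph~$\Cl G$ and not on the choice of initial and terminal states. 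Hence all the sets $\tclcv R{L_i}$ are recognized by one and the same homomorphism into $S^\omega(\Cl G)$, each being the preimage under $\rho_\Cl G$ of a subset $P_i$ of $S^\omega(\Cl G)$.

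It then remains to recall that the family of subsets of $\omo AR$ of the form $\rho_\Cl G^{-1}(P)$, with $P$ ranging over subsets of $S^\omega(\Cl G)$, is closed under Boolean operations. Indeed, taking preimages under a map commutes with union, intersection and complement, so that
$$\rho_\Cl G^{-1}(P)\cap\rho_\Cl G^{-1}(P')=\rho_\Cl G^{-1}(P\cap P')
\quad\text{and}\quad
\omo AR\setminus\rho_\Cl G^{-1}(P)=\rho_\Cl G^{-1}\bigl(S^\omega(\Cl G)\setminus P\bigr),$$
with the analogous identity for unions. Consequently, every Boolean combination of the sets $\tclcv R{L_i}$ equals $\rho_\Cl G^{-1}(P)$, where $P$ is the corresponding Boolean combination of the $P_i$ inside $S^\omega(\Cl G)$, and is therefore recognized by $\rho_\Cl G$ into $S^\omega(\Cl G)$.

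I do not expect a substantial obstacle here: the entire content of the statement lies in the hypothesis that a single digraph $\Cl G$ can be used for all the languages simultaneously, which is precisely what forces a common recognizing homomorphism $\rho_\Cl G$. Once this is in place, the passage to arbitrary Boolean combinations is a routine property of recognition by homomorphisms, so the proof is short and essentially bookkeeping in nature.
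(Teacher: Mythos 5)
Your proof is correct and follows exactly the paper's own argument: apply Theorem~\ref{t:R-closures} once per language with the corresponding initial and terminal state sets, observe that the recognizing homomorphism $\rho_\Cl G$ depends only on $\Cl G$, and use the fact that preimages commute with Boolean operations. The paper states this in one sentence; you have merely spelled out the same bookkeeping in full.
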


\begin{proof}
  It suffices to apply Theorem~\ref{t:R-closures} and note that inverse
  functions behave well with respect to Boolean operations.
\end{proof}

For instance, one may take the same labeled digraph \Cl G to recognize
several given regular languages over the same finite alphabet $A$, such
as the disjoint union of their minimal automata where, naturally, the
choice of initial and terminal states depends on the language.
Corollary~\ref{c:recognizing-several-closures} then provides an
algorithm to compute the intersection of the pro-\pv R closures of the
given languages in~\omo AR.

Recall that a subset $P$ of a finite semigroup $S$ is said to be
\emph{\pv V-pointlike} if, for every relational morphism $\mu:S\to T$
into a semigroup from~\pv V, there is some $t\in T$ such that
$P\times\{t\}\subseteq\mu$. Equivalently, one may consider an arbitrary
onto homomorphism $\varphi:A^+\to S$, where $A$ is a finite alphabet,
and require that the closures of the regular languages $\varphi^{-1}(s)$
($s\in S$) in~\Om AV have some point in common. Since \pv R is
completely tame for the signature $\omega$, the preceding property for
the pseudovariety \pv R is equivalent to the sets $\tclcv
R{\varphi^{-1}(s)}$ ($s\in S$) having some point in common. In view of
Corollary~\ref{c:recognizing-several-closures}, we obtain the following
result.

\begin{Cor}
  \label{c:R-pointlikes}
  It is decidable whether a given subset of a finite semigroup is \pv
  R-pointlike.\qed
\end{Cor}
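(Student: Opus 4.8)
The plan is to reduce $\pv R$-pointlikeness to the nonemptiness of a finite intersection of pro-$\pv R$ closures of regular languages, and then to read off decidability from Corollary~\ref{c:recognizing-several-closures}. Let $S$ be the given finite semigroup and $P\subseteq S$ the subset to be tested. First I would fix an effective onto homomorphism $\varphi\colon A^+\to S$, for instance taking $A=S$ and $\varphi$ the evaluation map, which is onto since each $s\in S$ is the image of the one-letter word~$s$. For every $s\in P$ the language $L_s=\varphi^{-1}(s)$ is recognized by~$S$ through~$\varphi$, hence is regular and comes equipped with a computable finite automaton. By the characterization of pointlikes as common points of pro-$\pv R$ closures and the complete tameness of~$\pv R$ for the signature~$\omega$, both recalled above, $P$ is $\pv R$-pointlike if and only if the family $\{\tclcv R{L_s}:s\in P\}$ has a point in common, i.e.\ $\bigcap_{s\in P}\tclcv R{L_s}\neq\emptyset$ in~$\omo AR$.

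To apply Corollary~\ref{c:recognizing-several-closures} I would present all the $L_s$ on a single $A$-labeled digraph. Taking for $\Cl G=(Q,A,\delta)$ the right Cayley graph of~$\varphi$, with $Q=S^1$ and $\delta(a)$ the map $p\mapsto p\,\varphi(a)$, the automaton with initial set $\{1\}$ and terminal set $\{s\}$ recognizes $L_s$, the transition structure being shared among all $s\in P$. Corollary~\ref{c:recognizing-several-closures} then yields that the effectively constructible finite $\omega$-semigroup $S^\omega(\Cl G)$ recognizes every Boolean combination of the sets $\tclcv R{L_s}$. Concretely, by Theorem~\ref{t:R-closures} each $\tclcv R{L_s}$ equals $\rho_\Cl G^{-1}(P_s)$ for the computable subset $P_s=\{x\in S^\omega(\Cl G):\xi(x)\cap(\{1\}\times\{s\})\neq\emptyset\}$, whence $\bigcap_{s\in P}\tclcv R{L_s}=\rho_\Cl G^{-1}\bigl(\bigcap_{s\in P}P_s\bigr)$.

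It then remains to decide whether this last set is empty. A set of the form $\rho_\Cl G^{-1}(R)$ is nonempty exactly when $R$ meets the image of~$\rho_\Cl G$, and that image is the $\omega$-subsemigroup of~$S^\omega(\Cl G)$ generated by the triples $(F_a,\emptyset,a)$, obtained by closing this finite set under multiplication and the $[\omega]$-power. As $S^\omega(\Cl G)$ is finite, the subsets $P_s$, their intersection $\bigcap_{s\in P}P_s$, and the image of~$\rho_\Cl G$ are all finite and effectively computable, so checking that the intersection meets the image is a terminating computation; this settles decidability.

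The only genuinely delicate point is the first step, namely the passage from the relational-morphism definition of $\pv R$-pointlikeness to the statement about intersecting the $\omega$-term closures $\tclcv R{L_s}$ inside~$\omo AR$. This rests on two external inputs already invoked above: the identification of pointlikes with common points of pro-$\pv R$ closures, and the complete tameness of~$\pv R$ for~$\omega$, which is what licenses replacing closures in the full profinite semigroup~$\Om AR$ by closures of images of words within the $\omega$-subsemigroup~$\omo AR$. Everything after this reduction is a finite, mechanical computation enabled by the effectiveness of the construction of~$S^\omega(\Cl G)$ and of the recognizing subsets.
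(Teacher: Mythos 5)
Your proof is correct and follows essentially the same route as the paper: reduce \pv R-pointlikeness of $P$ to nonemptiness of $\bigcap_{s\in P}\tclcv R{\varphi^{-1}(s)}$ via the topological characterization of pointlikes and complete $\omega$-tameness of~\pv R, then apply Corollary~\ref{c:recognizing-several-closures} over a single shared digraph. You merely spell out two steps the paper leaves implicit (the concrete Cayley-graph automaton and the final emptiness test against the computable image of $\rho_{\Cl G}$), and both are handled correctly.
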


The previous corollary is not new. It was first proved
in~\cite{Almeida&Costa&Zeitoun:2004}. The proof may also be derived from
much more general results from~\cite{Almeida&Costa&Zeitoun:2005b} and
yet another approach to compute \pv R-pointlike sets was obtained
in~\cite{Almeida&Costa&Zeitoun:2006}.

We next consider a further algorithmic property associated with a
pseudovariety \pv V, which is important in the computation of Mal'cev
products with~\pv V. A subset $P$ of a finite semigroup $S$ is said to
be a \emph{\pv V-idempotent-pointlike subset} if, for every relational
morphism $\mu:S\to T$ into a semigroup from~\pv V, there is an
idempotent $e\in T$ such that $P\times\{e\}\subseteq\mu$.

\begin{Cor}
  \label{c:R-idempotent-pointlikes}
  It is decidable whether a given subset of a finite semigroup is \pv
  R-idempotent pointlike.
\end{Cor}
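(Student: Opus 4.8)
The plan is to follow the proof of Corollary~\ref{c:R-pointlikes}, but to search for a common point of the relevant closures that is moreover idempotent. Fix an onto homomorphism $\varphi\colon A^+\to S$ over some finite alphabet $A$ and put $L_p=\varphi^{-1}(p)$ for each $p\in P$; these are regular languages. First I would prove the characterization that $P$ is $\pv R$-idempotent pointlike if and only if the closures $\tclcv R{L_p}$ ($p\in P$) share a common point which is idempotent in~$\omo AR$. I would then show, using the recognizer $\rho_\Cl G$ furnished by Theorem~\ref{t:R-closures}, that this condition can be tested effectively.

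To obtain the characterization, I would first argue at the profinite level: $P$ is $\pv R$-idempotent pointlike if and only if the intersection of the closures of the $L_p$ in~$\Om AR$ contains an idempotent. Since $\varphi$ is onto, every relational morphism $\mu\colon S\to T$ with $T\in\pv R$ contains one of the form $\{(\varphi(w),\psi(w)):w\in A^+\}$ for a suitable map $\psi\colon A\to T$, so it suffices to consider the latter. The easy direction is then immediate: an idempotent $e$ of~$\Om AR$ lying in all the closures is sent by the continuous extension $\hat\psi$ to an idempotent of~$T$ inside $\bigcap_p\psi(L_p)$, which witnesses idempotent pointlikeness. For the converse I would use a compactness argument (detailed below). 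Once the profinite characterization is available, the complete tameness of~\pv R for the signature~$\omega$ \cite{Almeida&Costa&Zeitoun:2005b}, applied to the single equation $x=x^2$ together with the constraints requiring $x$ to lie in the (clopen) closure of each $L_p$, allows me to replace the profinite closures by the $\omega$-word closures $\tclcv R{L_p}$ and the idempotent of~$\Om AR$ by an idempotent of~$\omo AR$, just as complete tameness was invoked for ordinary common points in the discussion preceding Corollary~\ref{c:R-pointlikes}.

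For effectivity, I would use a single finite $A$-labeled digraph $\Cl G=(Q,A,\delta)$ whose underlying automaton recognizes each $L_p$ for an appropriate choice of initial and terminal vertices, and write $P_p$ for the corresponding subset of~$S^\omega(\Cl G)$ provided by Theorem~\ref{t:R-closures}. By that theorem and Corollary~\ref{c:recognizing-several-closures} we have $\bigcap_p\tclcv R{L_p}=\rho_\Cl G^{-1}\bigl(\bigcap_p P_p\bigr)$. Now every idempotent of~$\omo AR$ equals its own $\omega$-power, and $\rho_\Cl G$ is a homomorphism of $\omega$-semigroups for the $[\omega]$-power; hence the $\rho_\Cl G$-images of the idempotents of~$\omo AR$ are exactly the elements $s^{[\omega]}$ with $s$ in the image of~$\rho_\Cl G$. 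That image coincides with $\nu_{[\omega]}(\Cl T^\omega_A)$ (because $\pj R\circ\eta$ maps $\Cl T^\omega_A$ onto~$\omo AR$ and $\rho_\Cl G\circ\pj R\circ\eta=\nu_{[\omega]}$), that is, the $\omega$-subsemigroup generated by the triples $(F_a,\emptyset,a)$, a finite set obtained by closing these generators under multiplication and the $[\omega]$-power. Consequently, $P$ is $\pv R$-idempotent pointlike if and only if $\bigcap_p P_p$ contains an element of the form $s^{[\omega]}$ with $s\in\nu_{[\omega]}(\Cl T^\omega_A)$, and this is clearly decidable.

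The hard part will be the nontrivial direction of the profinite characterization. From idempotent pointlikeness one obtains, for each finite quotient $\Om AR\to T$, only \emph{some} idempotent of~$T$ inside $\bigcap_p\psi(L_p)$, and a priori these idempotents are not compatible with the bonding maps of the inverse system. I would resolve this by noting that a bonding map $T\to T'$ carries idempotents of $\bigcap_p\psi(L_p)$ to idempotents of $\bigcap_p\psi'(L_p)$, so that the sets of such idempotents form an inverse system of nonempty finite sets; its inverse limit is therefore nonempty and produces an idempotent of~$\Om AR$ lying in all the closures of the~$L_p$. With this compatibility argument in hand, the tameness reduction and the recognition step are routine given the machinery already developed in the paper.
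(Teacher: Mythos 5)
Your proposal is correct and follows essentially the same route as the paper: reduce, via tameness and fullness of~\pv R, to deciding whether $\bigcap_{p\in P}\tclcv R{L_p}$ contains an idempotent, then decide that condition using the recognizer of Theorem~\ref{t:R-closures}. The only substantive variation is the final test: the paper invokes Proposition~\ref{p:idempotency}, under which the images of idempotents are exactly the triples $(F,B,u)$ in the image of $\rho_\Cl G$ with $B=c(u)$, whereas you characterize them as the elements $s^{[\omega]}$ with $s$ in the (computable) image $\nu_{[\omega]}(\Cl T^\omega_A)$; the two tests are equivalent and equally effective, since idempotents of \omo AR are fixed by the $\omega$-power and $\rho_\Cl G$ is a homomorphism of $\omega$-semigroups. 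Your compactness argument for the profinite characterization is fine --- it is precisely what the paper's citation of tameness packages. One caveat on the tameness step: the system to use is the multi-variable one, with a variable $x_p$ for each $p\in P$, equations $x_p=x_q=x_p^2$ modulo \pv R, and clopen constraint $x_p\in\tclcv S{L_p}$; constraining a \emph{single} variable to lie simultaneously in the closure of every $L_p$ inside \Om AS encodes the strictly stronger requirement that $P$ be \pv S-pointlike with an idempotent witness, which is not what idempotent pointlikeness demands.
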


\begin{proof}
  Choose a finite alphabet $A$ and an onto homomorphism $\varphi:A^+\to S$,
  where $S$ is a given finite semigroup. Let $P$ be a finite subset
  of~$S$. By tameness \cite{Almeida&Costa&Zeitoun:2005b}, $P$ is \pv
  R-idempotent pointlike if and only if there is, for each $s\in P$,
  some $\omega$-word $w_s\in\tclcv S{\varphi^{-1}(s)}$ such that $\pj
  R(w_s)$ is the same idempotent independent of~$s$. Since %
  $\pj R(\tclcv S{\varphi^{-1}(s)}) %
  =\tclcv R{\varphi^{-1}(s)}$ %
  by fullness, we conclude that $P$ is \pv R-idempotent pointlike if and
  only if the intersection $I=\bigcap_{s\in P}\tclcv R{\varphi^{-1}(s)}$
  contains some idempotent.

  By Corollary~\ref{c:recognizing-several-closures}, there is a finite
  $A$-labeled digraph \Cl G such that $S^\omega(\Cl G)$ recognizes $I$.
  In view of Proposition~\ref{p:idempotency}, $I$ contains an idempotent
  if and only if the image of $I$ under~$\rho_{\Cl G}$ contains some
  triple $(F,B,u)$ such that $B=c(u)$, a condition that may be
  effectively tested.
\end{proof}

Again, Corollary~\ref{c:R-idempotent-pointlikes} follows from the
general tameness results of~\cite{Almeida&Costa&Zeitoun:2005b} but the
algorithms that may be derived from tameness are merely theoretical,
depending on enumerating in parallel all favorable and unfavorable
cases, until our instance of the problem is
produced~\cite{Almeida&Steinberg:2000a}. The algorithm described in the
proof of Corollary~\ref{c:R-idempotent-pointlikes} is much more
effective.

\begin{Prop}
  \label{p:size-of-RkG}
  For a given finite labeled digraph $\Cl G=(Q,A,\delta)$, let $m=|Q|$
  and $n=|A|$. Then, the cardinality of $S^\omega(\Cl G)$ is bounded
  above by $2^{(m^2+1)n}\cdot3\cdot n!$.
\end{Prop}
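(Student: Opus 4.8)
The plan is a direct counting argument. Since $S^\omega(\Cl G)$ is, by construction, a subset of $R^\omega(Q,A)$, it suffices to bound the cardinality of the latter set, and I would do this by summing, over each admissible third component $u\in\om A{LRB}$, the number of pairs $(F,B)$ that complete $u$ to a valid triple $(F,B,u)$.

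First I would count the possible third components. By the solution of the word problem for $\pv{LRB}$ recalled after the statement of Lemma~\ref{l:cumul-cont,LRB}, the elements of $\om A{LRB}$ are in bijection with the nonempty sequences of pairwise distinct letters of $A$; a sequence of length $k$ is determined by an ordered $k$-tuple of distinct letters, so their number is $\sum_{k=1}^{n}\frac{n!}{(n-k)!}$. Dividing by $n!$ and reindexing by $j=n-k$ gives $\sum_{j=0}^{n-1}\frac{1}{j!}<e<3$, so there are fewer than $3\cdot n!$ possible values of $u$. (Whether or not one adjoins the empty word is immaterial, since $\sum_{j=0}^{n}\frac{1}{j!}<e<3$ as well.)

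Next I would fix $u$ and count its completions. Writing $k=|c(u)|\le n$, the constraint $B\subseteq c(u)$ permits $2^{k}$ choices for $B$. For $F$, the defining condition $F(a)=1$ for all $a\in A^1\setminus c(u)$ forces $F(a)$ to equal the identity $1$ of $\Cl B(Q)$ off $c(u)$ (in particular at the adjoined symbol $1\in A^1$), so $F$ is entirely determined by its restriction to the $k$-element set $c(u)$. Since $\Cl B(Q)$ has $2^{m^2}$ elements (a binary relation on $Q$ being a subset of the $m^2$-element set $Q\times Q$), there are at most $(2^{m^2})^{k}=2^{m^2k}$ such restrictions. Hence the number of triples with third component $u$ is at most $2^{k}\cdot 2^{m^2k}=2^{(m^2+1)k}\le 2^{(m^2+1)n}$, using $k\le n$.

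Combining the two counts, $|R^\omega(Q,A)|$ is at most the number of admissible $u$ times the uniform per-$u$ bound $2^{(m^2+1)n}$, that is at most $3\cdot n!\cdot 2^{(m^2+1)n}$; a fortiori the same bound holds for $S^\omega(\Cl G)$. I do not expect a genuine obstacle here: the only step requiring a little care is the elementary estimate of $|\om A{LRB}|$ by $3\cdot n!$, where the ratio to $n!$ is precisely a partial sum of the exponential series and must be compared to $e$. Note finally that the extra conditions \eqref{eq:simple1}, \eqref{eq:simple2}, and \eqref{eq:elusive2} cutting $S^\omega(\Cl G)$ out of $R^\omega(Q,A)$ can only decrease the count, so they play no role in obtaining the stated upper bound.
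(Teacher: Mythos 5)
Your proposal is correct and follows essentially the same route as the paper: count the possible third components $u$ via the canonical forms for $\pv{LRB}$ (words without repeated letters, at most $3\cdot n!$ of them by comparison with the exponential series), and multiply by a $2^{(m^2+1)n}$ bound on the pairs $(F,B)$. Your per-$u$ refinement using $B\subseteq c(u)$ and $F\equiv 1$ off $c(u)$ is slightly sharper in spirit than the paper's uniform count of all $B\subseteq A$ and all $F$ with $F(1)=1$, but it collapses to the same stated bound.
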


\begin{proof}
  Recall that the elements of~$S^\omega(\Cl G)$ are triples $(F,B,u)$
  where $F$ is a function $A^1\to\Cl B(Q)$, $B\subseteq A$, and $u\in\Om
  A{LRB}$. The cardinality $|\Om A{LRB}|$ is the number of different
  words in $n$~letters without repeated letters. It is well known to be
  equal to %
  $n!\sum_{r=0}^n\frac1{r!}=\lfloor e\cdot n!\rfloor$, whence it is
  bounded above by $3\cdot n!$. %
  For an element $(F,B,u)$ of~$S^\omega(\Cl G)$, the function
  $F:A^1\to\Cl B(Q)$ is such that $F(1)=1$. There are $2^{m^2n}$ such
  functions.
\end{proof}

A finer analysis taking into account Properties
\eqref{eq:simple1}--\eqref{eq:elusive2} may lead to better estimates.
Even better estimates may perhaps hold for the $\omega$-subsemi\-group
$\mathop{\mathrm{Im}}\tilde{\rho}_{\Cl G}$.

\section*{Acknowledgments}

This work was partly supported by the \textsc{Pessoa} French-Portuguese
project ``Separation in automata theory: algebraic, logical, and
combinatorial aspects''.
The work of the first two authors was also partially supported
respectively by CMUP (UID/MAT/00144/2019) and CMAT (UID/MAT/
00013/2013), which are funded by FCT (Portugal) with national (MCTES)
and European structural funds (FEDER), under the partnership agreement
PT2020.
The work of the third author was also partly supported by the DeLTA
project ANR-16-CE40-0007.

\bibliographystyle{amsplain}
\bibliography{R-closures}
\end{document}